\def\NAT@def@citea{\def\@citea{\NAT@separator}}
\theoremstyle{plain}
\newtheorem{theorem}{Theorem}[section]
\newtheorem{lemma}[theorem]{Lemma}
\newtheorem{problem}[theorem]{Problem}
\theoremstyle{definition}
\newtheorem{definition}[theorem]{Definition}
\newtheorem{example}[theorem]{Example}
\theoremstyle{remark}
\newtheorem{remark}{Remark}
\newcommand{\beq}{\begin{equation}} \newcommand{\eeq}{\end{equation}}
\newcommand{\bea}{\begin{eqnarray}} \newcommand{\eea}{\end{eqnarray}}
\newcommand{\bear}{\begin{eqnarray*}} \newcommand{\eear}{\end{eqnarray*}}
\begin{document}

\title{Constants of motion for  Isoperimetric Variational Problems with Time Delay}

\author{
\name{G. S. F. Frederico\textsuperscript{a}, M.~J.~Lazo\textsuperscript{b}\thanks{CONTACT M.~J.~Lazo. Email: matheuslazo@furg.br}, M. N. F. Barreto\textsuperscript{a} and J. Paiva\textsuperscript{b}}
\affil{\textsuperscript{a}Federal University of Cear\'a, Campus de Russas, Russas, Brazil; \textsuperscript{b}Federal University of Rio Grande, Rio Grande - RS, Brazil.}
}

\maketitle



\begin{abstract}
In the present work we obtain the constants of motion for isoperimetric variational problems with time delay. We consider a constrained optimization problem where the Lagrangian function defining the functional depends on time delayed arguments. We prove the isoperimetric Euler--Lagrange and DuBois--Reymond type optimality conditions and, in order to investigate the constants of motion for this problem, we obtain a nonsmooth extension of Noether's symmetry theorem for isoperimetric variational problems with delayed arguments.

\end{abstract}




\maketitle


\section{Introduction}

One of the oldest and interesting classes of variational problems, with applications in several fields, is the isoperimetric problems \cite{Bruce:book}. The isoperimetric problem in mathematical physics has roots in the Queen Dido problem of the calculus of variations and has been subject to several investigations for a long time. The original Dido's problem was posed by the ancient Greeks: find the closed plane curve of a given length that encloses the largest area. The reason this problem is called isoperimetric is that one is maximizing the area inside the curve while keeping the perimeter fixed. More generally, an isoperimetric variational problem is a constrained optimization problem where one is trying to extremise a functional subjected to a functional constraint. 

On the other hand, the concept of symmetry plays an important role in science and engineering. Symmetries are described by transformations, which result in the same object after the transformations are carried out. They are described mathematically by parameter groups of transformations \cite{Logan:b,CD:Gel:1963}. In the calculus of variation their importance, as recognized by Noether in 1918 \cite{Noether:1971}, is connected with the existence of conservation laws that can be used to reduce the order of the Euler--Lagrange differential equations \cite{MR2351636}. Noether's symmetry theorem is nowadays recognized as one of the most beautiful results of the calculus of variations and optimal control \cite{CD:Djukic:1972,CD:Kosmann:2004}, and it becomes one of the most important theorems for physics in the 20th century. Since the seminal work of Emmy Noether, it is well known that all conservations laws in mechanics, \textrm{e.g.}, conservation of energy or conservation of momentum, are directly related to the invariance of the action functional under a family of transformations.

Within the years, the Noether's theorem has been studied by many authors and generalized in different directions: see \cite{CD:Bartos,Gastao:PhD:thesis,GastaoLazo,GF:JMAA:07,MR2323264,book:frac,NataliaNoether,ejc} and references therein. In particular, in a recent paper \cite{GF2012}, a Noether's theorem was formulated for variational problems with delayed arguments. The result is important because problems with delays play a crucial role in the modeling of real-life phenomena in various fields of applications \cite{ChiLoi,EFridman,GoKeMa}. In order to prove Noether's theorem with delays, it was assumed that admissible functions are $\mathcal{C}^2$-smooth and that Noether's conserved quantity holds along all $\mathcal{C}^2$-extremals of the Euler--Lagrange equations with time delay \cite{GF2012}.

In the present work, we obtained the time delay isoperimetric Euler--Lagrange and DuBois--Reymond type optimality conditions, we prove the existence and regularity of the minimizer for the problem, and finally we also obtain a nonsmooth extension of Noether's symmetry theorem for time delay isoperimetric variational problems. The Euler-Lagrange and DuBois-Reymond type equations we obtained generalizes the previous result found for variational problems with delayed arguments \cite{GF2012}. Furthermore, we extend the Noether's theorem with delays for time delay isoperimetric variational problems. In order to illustrate some applications, we present simple examples.

The text is organized as follows. In Section~\ref{sec:prelim} the fundamental problem of variational calculus with delayed arguments is formulated and a short review of the results for $\mathcal{C}^2$-smooth admissible functions is given. The main contributions of the paper appear in Sections~\ref{Main results}: we prove the isoperimetric Euler--Lagrange and DuBois--Reymond type optimality conditions (Theorem~\ref{Thm:FractELeq1} and
Theorem~\ref{theo:cdrnd}, respectively), and an isoperimetric Noether's symmetry theorem with time delay (Theorem~\ref{theo:tnnd}). Finally, the conclusions and open questions are presented in Section~\ref{sec:Conc}.

\section{Preliminaries}
\label{sec:prelim}

In this section, we review some necessary results on the calculus of variations with time delay. For more on variational problems with delayed arguments we refer the reader to
\cite{Basin:book,Bok,GoKeMa,DH:1968,Kra,Kharatishvili,Ros}.

The fundamental problem consists of minimizing a functional
\begin{equation}
\label{Pe}
J^{\tau}[q(\cdot)] = \int_{t_{1}}^{t_{2}}
L\left(t,q(t),\dot{q}(t),q(t-\tau),\dot{q}(t-\tau)\right) dt
\end{equation}
subject to boundary conditions
\begin{equation}
\label{Pe2}
q(t)=\delta(t)~\textnormal{ for }~t\in[t_{1}-\tau,t_{1}]~\textnormal{ and }~q(t_2)=q_{t_2}.
\end{equation}
We assume that the Lagrangian
$L :[t_1,t_2] \times (\mathbb{R}^{n})^4 \rightarrow \mathbb{R}$, $n\in\mathbb{N}$,
is a $\mathcal{C}^{2}$-function with respect to all its arguments,
the admissible functions $q(\cdot)$ are $\mathcal{C}^2$-smooth, $t_{1}< t_{2}$ are fixed in $\mathbb{R}$, $\tau$ is a given positive real number such that $\tau<t_{2}-t_{1}$, and $\delta$ is a given piecewise smooth function on $[t_1-\tau,t_1]$. Throughout the text, $\partial_{i}L$ denotes the partial derivative of $L$ with respect to its $i$th argument, $i=1,\dots,5$ and $\dot{q}=\frac{dq}{dt}\,.$ For convenience of notation, we introduce the operator $[\cdot]_{\tau}$ defined by
$$
[q]_{\tau}(t)=(t,q(t),\dot{q}(t),q(t-\tau),\dot{q}(t-\tau)).
$$

The next theorem gives a necessary optimality condition of Euler--Lagrange type
for \eqref{Pe}--\eqref{Pe2}.

\begin{theorem}[Euler--Lagrange equations with time delay \cite{DH:1968}]
\label{th:EL1}
If $q(\cdot)\in\mathcal{C}^2$ is a minimizer for problem \eqref{Pe}--\eqref{Pe2},
then $q(\cdot)$ satisfies the following Euler--Lagrange equations with time delay:
\begin{equation}
\label{EL1}
\begin{cases}
\frac{d}{dt}\left\{\partial_{3}L[q]_{\tau}(t)+
\partial_{5}L[q]_{\tau}(t+\tau)\right\}
=\partial_{2}L[q]_{\tau}(t)+\partial_{4}L[q]_{\tau}(t+\tau),
\quad t_{1}\leq t\leq t_{2}-\tau,\\
\frac{d}{dt}\partial_{3}L[q]_{\tau}(t) =\partial_{2}L[q]_{\tau}(t),
\quad t_{2}-\tau< t\leq t_{2}.
\end{cases}
\end{equation}
\end{theorem}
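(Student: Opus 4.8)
The plan is to use the classical first-variation technique, adapted to accommodate the delay. First I would take a minimizer $q(\cdot)\in\mathcal{C}^2$ and embed it in a one-parameter family $q(\cdot)+\varepsilon h(\cdot)$, where $h(\cdot)\in\mathcal{C}^2$ is an arbitrary variation compatible with the data \eqref{Pe2}, i.e.\ $h(t)=0$ for $t\in[t_1-\tau,t_1]$ and $h(t_2)=0$. Since $q$ minimizes $J^\tau$, the scalar function $\varepsilon\mapsto J^\tau[q+\varepsilon h]$ attains a minimum at $\varepsilon=0$, so its derivative there must vanish. Differentiating under the integral sign and applying the chain rule expresses the first variation as the integral over $[t_1,t_2]$ of $\partial_2 L[q]_\tau(t)\,h(t)+\partial_3 L[q]_\tau(t)\,\dot h(t)+\partial_4 L[q]_\tau(t)\,h(t-\tau)+\partial_5 L[q]_\tau(t)\,\dot h(t-\tau)$.

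The key delay-specific step is to eliminate the advanced-looking arguments $h(t-\tau)$ and $\dot h(t-\tau)$ through the linear change of variable $t\mapsto t+\tau$ in the last two terms. This rewrites $\int_{t_1}^{t_2}\partial_4 L[q]_\tau(t)\,h(t-\tau)\,dt$ as $\int_{t_1-\tau}^{t_2-\tau}\partial_4 L[q]_\tau(t+\tau)\,h(t)\,dt$, and since $h$ vanishes identically on $[t_1-\tau,t_1]$ the lower limit may be raised to $t_1$; the $\partial_5$ contribution is handled in exactly the same way. After this shift the delayed terms are supported only on $[t_1,t_2-\tau]$, which is precisely what forces the domain of integration to be split at $t=t_2-\tau$.

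Next I would integrate by parts the terms carrying $\dot h$, using $h(t_1)=h(t_2)=0$ to annihilate the endpoint contributions at $t_1$ and $t_2$. Collecting the integrand on $[t_1,t_2-\tau]$ then produces the bracket $\partial_2 L[q]_\tau(t)+\partial_4 L[q]_\tau(t+\tau)-\frac{d}{dt}\bigl\{\partial_3 L[q]_\tau(t)+\partial_5 L[q]_\tau(t+\tau)\bigr\}$, whereas on $[t_2-\tau,t_2]$ only $\partial_2 L[q]_\tau(t)-\frac{d}{dt}\partial_3 L[q]_\tau(t)$ survives. Restricting to variations $h$ supported in the interior of one subinterval at a time and invoking the fundamental lemma of the calculus of variations then yields the two equations of \eqref{EL1} separately, each bracket being forced to vanish identically by the continuity of $L$ and its derivatives.

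The main point to watch is the integration by parts of the shifted $\partial_5$ term on $[t_1,t_2-\tau]$, which generates a boundary contribution at $t=t_2-\tau$ proportional to $\partial_5 L[q]_\tau(t_2)\,h(t_2-\tau)$; because the variation $h$ is not constrained to vanish at $t_2-\tau$, this term cannot be dropped in full generality. I would circumvent it by first selecting variations that vanish outside a compact subinterval of $(t_1,t_2-\tau)$ — for which $h(t_2-\tau)=0$, so the second integral and the boundary term both disappear — thereby isolating the first Euler--Lagrange equation, and then selecting variations supported in $(t_2-\tau,t_2)$ to isolate the second. This decoupling delivers the two stated equations without having to resolve the residual transversality term, which falls outside the assertion being proved.
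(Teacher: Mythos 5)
Your proof is correct and follows essentially the same route the paper relies on: the paper states Theorem~\ref{th:EL1} without proof, citing \cite{DH:1968}, but its own proof of the isoperimetric analogue (Theorem~\ref{Thm:FractELeq1}) uses exactly your scheme --- first variation, the change of variable $t\mapsto t+\tau$ to move the delayed terms onto $[t_1,t_2-\tau]$, splitting the domain at $t_2-\tau$, integration by parts, and the fundamental lemma applied to variations supported in each subinterval separately. Your device of using variations compactly supported in $(t_1,t_2-\tau)$ to kill the boundary term at $t_2-\tau$ is a clean substitute for the paper's equivalent move of imposing $h(t_2-\tau)=0$ on the admissible variation class.
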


\begin{definition}[Weak Extremals]
\label{def:scale:ext}
The solutions $q(\cdot)$ of the Euler--Lagrange equations \eqref{EL1} with time delay are called \emph{weak extremals}.
\end{definition}

\begin{definition}[Invariance of \eqref{Pe}]
\label{def:invnd}
Consider the following $s$-parameter group of infinitesimal transformations:
\begin{equation}
\label{eq:tinf}
\begin{cases}
\bar{t} = t + s\eta(t,q) + o(s) \, ,\\
\bar{q}(t) = q(t) + s\xi(t,q) + o(s),
\end{cases}
\end{equation}
where
$\eta\in \mathcal{C}^1(\mathbb{R}^{n+1},\mathbb{R})$
and $\xi\in \mathcal{C}^1(\mathbb{R}^{n+1},\mathbb{R}^n)$.
Functional \eqref{Pe} is said to be invariant under \eqref{eq:tinf} if
\begin{multline*}
\label{eq:invnd}
0 = \frac{d}{ds}
\int_{\bar{t}(I)} L\left(t+s\eta(t,q(t))+o(s),q(t)+s\xi(t,q(t))+o(s),
\frac{\dot{q}(t)+s\dot{\xi}(t,q(t))}{1+s\dot{\eta}(t,q(t))},\right.\\
\left.q(t-\tau)+s\xi(t-\tau,q(t-\tau))+o(s),\frac{\dot{q}(t-\tau)
+s\dot{\xi}(t-\tau,q(t-\tau))}{1+s\dot{\eta}(t-\tau,q(t-\tau))}\right)
(1+s\dot{\eta}(t,q(t))) dt \Biggl.\Biggr|_{s=0}
\end{multline*}
for any  subinterval $I \subseteq [t_1,t_2]$.
\end{definition}

\begin{definition}[Constant of motion/conservation law with time delay]
\label{def:leicond}
We say that a quantity
$C(t,t+\tau,q(t),q(t-\tau),q(t+\tau),\dot{q}(t),\dot{q}(t-\tau),\dot{q}(t+\tau))$ is
a constant of motion with time delay $\tau$ if
\begin{equation}
\label{eq:conslaw:td1}
\frac{d}{dt} C(t,t+\tau,q(t),q(t-\tau),q(t+\tau),\dot{q}(t),\dot{q}(t-\tau),\dot{q}(t+\tau))= 0
\end{equation}
along all the weak extremals $q(\cdot)$ (\textrm{cf.} Definition~\ref{def:scale:ext}).
The equality \eqref{eq:conslaw:td1} is then a conservation law with time delay.
\end{definition}

The next theorem extends the DuBois--Reymond necessary optimality condition to problems of the calculus of variations with time delay.

\begin{theorem}[DuBois--Reymond necessary conditions with time delay \cite{GF2012}]
\label{theo:cdrnd0} If $q(\cdot)\in\mathcal{C}^2$ is a weak extremal of the
functional \eqref{Pe} subject to \eqref{Pe2} such that
\begin{equation*}
\partial_4L[q]_{\tau}(t+\tau)\cdot \dot{q}(t)+\partial_5L[q]_{\tau}(t+\tau)\cdot
\ddot{q}(t)=0
\end{equation*}
for all $t\in[t_1-\tau,t_2-\tau]$, then it satisfies the following
conditions:
\begin{equation}
\label{eq:cdrnd0}
\begin{cases}
\frac{d}{dt}\left\{L[q]_{\tau}(t)-\dot{q}(t)\cdot(\partial_{3} L[q]_{\tau}(t)
+\partial_{5} L[q]_{\tau}(t+\tau))\right\} = \partial_{1} L[q]_{\tau}(t),\quad
t_1\leq t\leq t_{2}-\tau,\\
\frac{d}{dt}\left\{L[q]_{\tau}(t)
-\dot{q}(t)\cdot\partial_{3} L[q]_{\tau}(t)\right\}
=\partial_{1} L[q]_{\tau}(t),\quad t_2-\tau\leq t\leq t_{2}\,.
\end{cases}
\end{equation}
\end{theorem}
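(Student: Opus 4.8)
The plan is to differentiate each candidate DuBois--Reymond expression directly by the chain rule and then eliminate the unwanted terms using the two branches of the Euler--Lagrange equations \eqref{EL1} together with the vanishing condition assumed in the hypothesis. Since the two formulas in \eqref{eq:cdrnd0} are governed by the two distinct branches of \eqref{EL1}, I would treat the subintervals $[t_1,t_2-\tau]$ and $[t_2-\tau,t_2]$ separately. The basic ingredient throughout is the total derivative obtained from the chain rule, $\frac{d}{dt}L[q]_\tau(t)=\partial_1 L[q]_\tau(t)+\partial_2 L[q]_\tau(t)\dot q(t)+\partial_3 L[q]_\tau(t)\ddot q(t)+\partial_4 L[q]_\tau(t)\dot q(t-\tau)+\partial_5 L[q]_\tau(t)\ddot q(t-\tau)$.

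On $[t_1,t_2-\tau]$, I would differentiate the bracket $L[q]_\tau(t)-\dot q(t)\cdot\bigl(\partial_3 L[q]_\tau(t)+\partial_5 L[q]_\tau(t+\tau)\bigr)$ and substitute the first branch of \eqref{EL1}, namely $\frac{d}{dt}\bigl(\partial_3 L[q]_\tau(t)+\partial_5 L[q]_\tau(t+\tau)\bigr)=\partial_2 L[q]_\tau(t)+\partial_4 L[q]_\tau(t+\tau)$, to replace the derivative of the conjugate-momentum factor. After inserting the chain-rule expansion above, the terms carrying $\dot q(t)\,\partial_2 L[q]_\tau(t)$ and $\ddot q(t)\,\partial_3 L[q]_\tau(t)$ cancel against their counterparts, leaving $\partial_1 L[q]_\tau(t)$ plus two groups of delay terms: $\partial_4 L[q]_\tau(t)\dot q(t-\tau)+\partial_5 L[q]_\tau(t)\ddot q(t-\tau)$ and $-\bigl(\partial_4 L[q]_\tau(t+\tau)\dot q(t)+\partial_5 L[q]_\tau(t+\tau)\ddot q(t)\bigr)$. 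The second group vanishes immediately by the hypothesis evaluated at $t$, while the first vanishes by the same hypothesis read at the shifted time $t-\tau$; this is legitimate since $t\in[t_1,t_2-\tau]$ gives $t-\tau\in[t_1-\tau,t_2-2\tau]\subseteq[t_1-\tau,t_2-\tau]$. This produces the first line of \eqref{eq:cdrnd0}.

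For the second subinterval I would repeat the computation, now differentiating $L[q]_\tau(t)-\dot q(t)\cdot\partial_3 L[q]_\tau(t)$ and using the second branch $\frac{d}{dt}\partial_3 L[q]_\tau(t)=\partial_2 L[q]_\tau(t)$. The algebra is shorter: the $\partial_2$ and $\partial_3$ terms cancel as before, and only the single delay group $\partial_4 L[q]_\tau(t)\dot q(t-\tau)+\partial_5 L[q]_\tau(t)\ddot q(t-\tau)$ survives, which again vanishes by the shifted hypothesis, since now $t-\tau\in[t_2-2\tau,t_2-\tau]\subseteq[t_1-\tau,t_2-\tau]$ (using $\tau<t_2-t_1$). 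The computation itself is routine; the only genuine obstacle is the bookkeeping of which argument the hypothesis is evaluated at — recognizing that the $(t+\tau)$-terms are killed by the stated form of the hypothesis whereas the $(t-\tau)$-terms require its time-shifted version — together with verifying that the relevant shifted times indeed remain inside $[t_1-\tau,t_2-\tau]$.
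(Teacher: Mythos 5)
Your proof is correct and follows essentially the same route as the paper's own argument (given there for the isoperimetric generalization, Theorem~\ref{theo:cdrnd}): chain-rule expansion of the candidate quantity, substitution of the appropriate branch of the Euler--Lagrange equations \eqref{EL1}, and two applications of the hypothesis --- once at $t$ to eliminate the $(t+\tau)$-terms and once at the shifted time $t-\tau$ to eliminate the $(t-\tau)$-terms, with the same interval bookkeeping. The only cosmetic difference is that the paper integrates the derivative from $t_1$ to an arbitrary $x$ and concludes by the arbitrariness of $x$, whereas you carry out the identical cancellation pointwise.
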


\begin{remark}
If we assume that admissible functions in problem \eqref{Pe}--\eqref{Pe2} are Lipschitz continuous, then one can show that the DuBois--Reymond necessary conditions with time delay \eqref{eq:cdrnd} are still valid (cf. \cite{GF2013}).
\end{remark}

Theorem~\ref{theo:tnnd1} establishes an extension of Noether's
theorem to problems of the calculus of variations with time delay.

\begin{theorem}[Noether's symmetry theorem with time delay \cite{GF2012}]
\label{theo:tnnd1} If functional \eqref{Pe} is invariant in the
sense of Definition~\ref{def:invnd} such that
\begin{equation*}
\partial_4L[q]_{\tau}(t+\tau)\cdot \dot{q}(t)+\partial_5L[q]_{\tau}(t+\tau)\cdot
\ddot{q}(t)=0
\end{equation*}
and, then the quantity
$C(t,t+\tau,q(t),q(t-\tau),q(t+\tau),\dot{q}(t),\dot{q}(t-\tau),\dot{q}(t+\tau))$
defined by
\begin{multline}
\label{eq:tnnd1}
\left(\partial_{3} L[q]_{\tau}(t)
+\partial_{5} L[q]_{\tau}(t+\tau)\right)\cdot\xi(t,q(t))\\
+\Bigl(L[q]_{\tau}(t)-\dot{q}(t)\cdot(\partial_{3} L[q]_{\tau}(t)
+\partial_{5} L[q]_{\tau}(t+\tau))\Bigr)\eta(t,q(t))
\end{multline}
for $t_1\leq t\leq t_{2}-\tau$ and by
\begin{equation}
\label{eq:tnnd2}
\partial_{3} L[q]_{\tau}(t)\cdot\xi(t,q(t))
+\Bigl(L[q]_{\tau}(t)-\dot{q}(t)\cdot\partial_{3} L[q]_{\tau}(t)\Bigr)\eta(t,q(t))
\end{equation}
for $t_2-\tau< t\leq t_{2}\,,$ is a constant of motion with time delay
(\textrm{cf.} Definition~\ref{def:leicond}).
\end{theorem}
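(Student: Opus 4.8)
The plan is to linearize the invariance condition of Definition~\ref{def:invnd} into a single differential identity and then, using the Euler--Lagrange equations \eqref{EL1} and the DuBois--Reymond conditions \eqref{eq:cdrnd0}, to recognize \eqref{eq:tnnd1}--\eqref{eq:tnnd2} as quantities with vanishing total derivative along extremals. First I would differentiate the integral in Definition~\ref{def:invnd} with respect to $s$ at $s=0$; applying the chain rule to the five slots of $L$ and to the Jacobian factor $(1+s\dot\eta)$, and writing $\xi=\xi(t,q(t))$, $\eta=\eta(t,q(t))$, with $\xi^\tau,\dot\xi^\tau,\dot\eta^\tau$ the analogous quantities at $(t-\tau,q(t-\tau))$, this yields
\begin{multline*}
0 = \partial_1 L[q]_\tau(t)\,\eta + \partial_2 L[q]_\tau(t)\,\xi + \partial_3 L[q]_\tau(t)\,(\dot\xi-\dot q\,\dot\eta) \\
+ \partial_4 L[q]_\tau(t)\,\xi^\tau + \partial_5 L[q]_\tau(t)\,(\dot\xi^\tau-\dot q(t-\tau)\,\dot\eta^\tau) + L[q]_\tau(t)\,\dot\eta .
\end{multline*}

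Next I would dispose of the delayed arguments. Integrating this identity over an arbitrary subinterval and performing the change of variable $t\mapsto t+\tau$ in the two terms carrying $\partial_4 L$ and $\partial_5 L$ replaces each delayed evaluation $[q]_\tau(t)$ by the advanced one $[q]_\tau(t+\tau)$ multiplied by the \emph{undelayed} $\xi,\dot\xi,\dot\eta$, at the price of shifting the interval of integration. Since $\tau<t_2-t_1$, this is precisely what singles out the two regimes of \eqref{EL1} and \eqref{eq:cdrnd0}: for $t_1\le t\le t_2-\tau$ the advanced arguments $t+\tau$ still lie in $[t_1,t_2]$, whereas for $t_2-\tau\le t\le t_2$ they leave the domain and the delayed contributions drop out.

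Then I would assemble the conservation law on each regime. On $[t_1,t_2-\tau]$, the first line of \eqref{EL1} lets me write $\partial_2 L[q]_\tau(t)+\partial_4 L[q]_\tau(t+\tau)=\frac{d}{dt}\{\partial_3 L[q]_\tau(t)+\partial_5 L[q]_\tau(t+\tau)\}$, so the $\xi$ and $\dot\xi$ contributions fold into $\frac{d}{dt}\{\xi\,(\partial_3 L[q]_\tau(t)+\partial_5 L[q]_\tau(t+\tau))\}$; the first line of \eqref{eq:cdrnd0} lets me substitute for $\partial_1 L[q]_\tau(t)$, so the $\eta$ and $\dot\eta$ contributions fold into $\frac{d}{dt}\{\eta\,(L[q]_\tau(t)-\dot q\,(\partial_3 L[q]_\tau(t)+\partial_5 L[q]_\tau(t+\tau)))\}$. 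Their sum is exactly the total derivative of \eqref{eq:tnnd1}, hence that quantity is constant. Carrying out the same steps on $[t_2-\tau,t_2]$, now with the delayed terms absent and with the second lines of \eqref{EL1} and \eqref{eq:cdrnd0}, produces \eqref{eq:tnnd2}. By Definition~\ref{def:leicond} this establishes the conservation law.

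The hard part will be the bookkeeping of the delayed terms: the change of variable $t\mapsto t+\tau$ together with the splitting at $t_2-\tau$ must be tracked so that the advanced derivatives $\partial_5 L[q]_\tau(t+\tau)$ recombine into exactly the groupings of \eqref{eq:tnnd1}, and the residual pieces over $[t_1-\tau,t_1]$, where $q=\delta$ is prescribed, must be shown not to contribute. One should also note that \eqref{eq:cdrnd0} is available only under the standing hypothesis $\partial_4 L[q]_\tau(t+\tau)\dot q(t)+\partial_5 L[q]_\tau(t+\tau)\ddot q(t)=0$ required by Theorem~\ref{theo:cdrnd0}; this is exactly the assumption imposed in the statement, and it is what permits the $\eta$-part of the constant of motion to be written as a total derivative.
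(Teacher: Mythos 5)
Your proposal is correct and follows essentially the same route as the paper: linearize the invariance condition at $s=0$, shift the delayed terms by the change of variable $t\mapsto t+\tau$ (using $\xi=\eta=0$ on $[t_1-\tau,t_1]$, which forces the split at $t_2-\tau$), then substitute the Euler--Lagrange equations \eqref{EL1} and the DuBois--Reymond conditions \eqref{eq:cdrnd0} to exhibit the integrand as a total derivative, and conclude by the arbitrariness of the subinterval --- exactly the scheme of Lemma~\ref{thm:CNSI:SCV} and the proof of Theorem~\ref{theo:tnnd}. You also correctly identify that the standing hypothesis of the statement is precisely what makes \eqref{eq:cdrnd0} available, which is the only delicate point.
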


\section{Main results}
\label{Main results}

Along the work we have $1 < p < \infty$, and $p'$ denotes the adjoint of $p\,$.
Let $\left\|\cdot\right\|$ be the standard Euclidean norm of $\mathbb{R}^n$.
For any $1 \leq r \leq \infty$, we denote
\begin{itemize}
\item by $\mathrm{L}^r := \mathrm{L}^r (t_1,t_2;\mathbb{R}^n)$ the usual space of $r$-Lebesgue integrable functions endowed with its usual norm $\Vert \cdot \Vert_{\mathrm{L}^r}$;
\item by $\mathrm{W}^{1,r} := \mathrm{W}^{1,r} (t_1,t_2;\mathbb{R}^n)$ the usual $r$-Sobolev space endowed with its usual norm $\Vert \cdot \Vert_{\mathrm{W}^{1,r}}$;
\item by $\mathcal{C}:=  \mathcal{C}([t_1,t_2];\mathbb{R}^n)$  the standard space of continuous functions, and by $\mathcal{C}^\infty_c := \mathcal{C}^\infty_c ([t_1,t_2];\mathbb{R}^n)$ as the standard space of infinitely differentiable functions compactly supported in $(t_1,t_2)\,.$
\end{itemize}
Let us remind that the compact embedding $\mathrm{W}^{1,r} \hookrightarrow \mathcal{C}$ holds for $1 < r \le +\infty$ (see \cite{MR2759829}).

Now, We define  the isoperimetric variational problem under consideration:
\begin{problem}(The isoperimetric variational problem with time delay)
\label{Pb1}
The isoperimetric  problem of the calculus of variations with time delay consists to find the stationary functions of the functional \eqref{Pe}, subject to  isoperimetric equality constraints
\begin{equation}
\label{CT} I^{\tau}[q(\cdot)]=\int_{t_1}^{t_2} g[q]_{\tau}(t)
dt=l,\,\,\,l\in\mathbb{R}\,,
\end{equation}
and  boundary conditions \eqref{Pe2}.

We assume that $l$ is a specified real constant and the functionals $J^{\tau}, I^{\tau}$ are defined in  a weakly closed subset $\mathbb{U}$ of $\mathrm{W}^{1,p}\,.$ We also assume that $(t,s, y, u, v) \mapsto L(t,s, y, u, v)$ and $(t,s, y, u, v) \mapsto g(t,s, y, u, v)$  to be
 a $C^1$-function.
 \end{problem}

\subsection{Isoperimetric Euler--Lagrange equations with time delay}
\label{iso}

Theorem~\ref{th:EL1} motivates the following definition.

\begin{definition} An admissible function $q(\cdot)\in\mathcal{C}^2$  is an extremal for problem \eqref{CT}--\eqref{Pe2}
if it satisfies the following Euler--Lagrange equations with time delay:
\begin{equation}
\label{EL11}
\begin{cases}
\frac{d}{dt}\left\{\partial_{3}g[q]_{\tau}(t)+
\partial_{5}g[q]_{\tau}(t+\tau)\right\}
=\partial_{2}g[q]_{\tau}(t)+\partial_{4}g[q]_{\tau}(t+\tau),
\quad t_{1}\leq t\leq t_{2}-\tau,\\
\frac{d}{dt}\partial_{3}g[q]_{\tau}(t) =\partial_{2}g[q]_{\tau}(t),
\quad t_{2}-\tau< t\leq t_{2}.
\end{cases}
\end{equation}
\end{definition}

The arguments of the calculus of variations assert that by using the Lagrange multiplier rule, Problem~\ref{Pb1} is equivalent to the following augmented problem \cite[$\S12.1$]{CD:Gel:1963}: to minimize
\begin{equation}
\label{agp}
\begin{split}
J^{\tau}[q(\cdot),\lambda]
&= \int_{t_1}^{t_2} F[q,\lambda]_{\tau}(t)dt\\
&:=\int_{t_1}^{t_2}
\left[L[q]_{\tau}(t)
-\lambda \cdot g[q]_{\tau}(t)\right] dt
\end{split}
\end{equation}
subject to \eqref{Pe2}, where $[q,\lambda]_{\tau}(t)=(t,q(t),\dot{q}(t),q(t-\tau),\dot{q}(t-\tau),\lambda)$.

 The augmented Lagrangian
\begin{equation}
\label{eq:aug:Lag}
F:=L-\lambda \cdot g,
\end{equation}
$\lambda\in\mathbb{R}^k$,
has an important role in our study.

The notion of extremizer (a local minimizer or a local maximizer)
can be found in \cite{CD:Gel:1963}. Extremizers can be classified
as normal or abnormal.

\begin{definition}\label{def:extr}
An extremizer of Problem~\ref{Pb1} that does not satisfy \eqref{EL11} is said to be a normal extremizer; otherwise (i.e., if it satisfies \eqref{EL11} for all $t\in[t_1,t_2]$),
is said to be abnormal.
\end{definition}

The following theorem gives a necessary condition for $q(\cdot)$ to be a solution of  Problem~\ref{Pb1} under the assumption that $q(\cdot)$ is
a normal extremizer.

\begin{theorem}
\label{Thm:FractELeq1} If
$q(\cdot)\in C^2\left([t_{1}-\tau,t_{2}]\right)$ is a normal
extremizer to Problem~\ref{Pb1}, then it satisfies the following
\emph{isoperimetric Euler--Lagrange equation with time delay}:
\begin{equation}
\label{eq:eldf11}
\begin{cases}
\frac{d}{dt}\left\{\partial_{3}F[q,\lambda]_{\tau}(t)+
\partial_{5}F[q,\lambda]_{\tau}(t+\tau)\right\}\\
=\partial_{2}F[q,\lambda]_{\tau}(t)+\partial_{4}F[q,\lambda]_{\tau}(t+\tau),
\quad t_{1}\leq t\leq t_{2}-\tau,\\,
\frac{d}{dt}\partial_{3}F[q,\lambda]_{\tau}(t) =\partial_{2}F[q,\lambda]_{\tau}(t), \quad t_{2}-\tau< t\leq t_{2}\,,
\end{cases}
\end{equation}
$t \in [t_1,t_2]$, where $F$ is the augmented Lagrangian \eqref{eq:aug:Lag}
associated with Problem~\ref{Pb1}.
\end{theorem}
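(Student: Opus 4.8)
The plan is to reduce the isoperimetric problem to an unconstrained one via the Lagrange multiplier rule, and then invoke the already-established Euler–Lagrange equations with time delay (Theorem~\ref{th:EL1}) applied to the augmented Lagrangian $F = L - \lambda\cdot g$. The statement of the theorem essentially asserts that a normal extremizer satisfies the delayed Euler–Lagrange system for $F$, so the heart of the argument is justifying the existence of a fixed multiplier $\lambda\in\mathbb{R}^k$ for which this holds.

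\medskip

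\noindent\textbf{Approach.} First I would consider a two-parameter family of admissible variations
$$
\hat{q}(t) = q(t) + \varepsilon_1 h_1(t) + \varepsilon_2 h_2(t),
$$
where $h_1,h_2 \in C^2\big([t_1-\tau,t_2]\big)$ vanish on $[t_1-\tau,t_1]$ and at $t=t_2$ (so that the boundary conditions \eqref{Pe2} are preserved for all $\varepsilon_1,\varepsilon_2$), and $h_2$ plays the role of the variation needed to keep the isoperimetric constraint satisfiable. Define
$$
\hat{J}(\varepsilon_1,\varepsilon_2) = J^{\tau}[\hat q(\cdot)], \qquad
\hat{I}(\varepsilon_1,\varepsilon_2) = I^{\tau}[\hat q(\cdot)].
$$
Since $q(\cdot)$ is a normal extremizer, by Definition~\ref{def:extr} it does \emph{not} satisfy the Euler–Lagrange system \eqref{EL11} for $g$; I would use this to select $h_2$ so that $\frac{\partial \hat I}{\partial \varepsilon_2}(0,0)\neq 0$. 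This is exactly where normality is used: it guarantees that the gradient of the constraint functional is nonzero, so the constraint $\hat I(\varepsilon_1,\varepsilon_2)=l$ defines, by the implicit function theorem, a $C^1$ curve $\varepsilon_2=\varepsilon_2(\varepsilon_1)$ through the origin along which admissibility is maintained.

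\medskip

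\noindent\textbf{Key steps.} Restricting $\hat J$ to this curve, the point $\varepsilon_1=0$ is an unconstrained extremum, so its derivative vanishes; the Lagrange multiplier rule then produces a constant $\lambda\in\mathbb{R}^k$ with
$$
\frac{\partial}{\partial\varepsilon_1}\Big(\hat J - \lambda\cdot\hat I\Big)\Big|_{(0,0)} = 0.
$$
Next I would compute this first variation of the augmented functional \eqref{agp} directly. The only nonroutine computation is the handling of the delayed arguments: differentiating under the integral sign produces terms in $h_1(t-\tau)$ and $\dot h_1(t-\tau)$, which I would convert to terms in $h_1(t)$ and $\dot h_1(t)$ by the change of variable $t\mapsto t+\tau$ in the corresponding integrals, exactly as in the proof of Theorem~\ref{th:EL1}. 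This substitution is what splits the time interval into $[t_1,t_2-\tau]$ and $[t_2-\tau,t_2]$ and generates the shifted terms $\partial_4 F[q,\lambda]_\tau(t+\tau)$ and $\partial_5 F[q,\lambda]_\tau(t+\tau)$ in the first equation of \eqref{eq:eldf11}. After integrating the $\dot h_1$ terms by parts and collecting coefficients of $h_1(t)$ on each subinterval, the fundamental lemma of the calculus of variations (applied separately on $[t_1,t_2-\tau]$ and on $[t_2-\tau,t_2]$, since $h_1$ is otherwise arbitrary) yields the two cases of \eqref{eq:eldf11}.

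\medskip

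\noindent\textbf{Main obstacle.} The principal difficulty is the correct bookkeeping of the time-delay shifts near the junction $t=t_2-\tau$: the delayed contribution $\partial_4 L$, $\partial_5 L$ is present only when $t+\tau\le t_2$, so the reindexing argument must be carried out carefully on each piece to ensure the boundary terms from integration by parts cancel and that no spurious terms survive at the endpoints. Since $F$ is $C^2$ and $q(\cdot)\in C^2$, all differentiations under the integral are justified, and the substitution $s=t+\tau$ is legitimate; thus the argument reduces, once the multiplier $\lambda$ is fixed, to applying Theorem~\ref{th:EL1} verbatim to $F$ in place of $L$.
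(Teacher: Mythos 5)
Your proposal follows essentially the same route as the paper's own proof: a two-parameter family of variations $\hat{q}=q+\epsilon_1 h_1+\epsilon_2 h_2$, use of normality (failure of \eqref{EL11}) together with the fundamental lemma to get $\partial\hat{I}/\partial\epsilon_2(0,0)\neq 0$, the implicit function theorem plus the finite-dimensional Lagrange multiplier rule to produce $\lambda$, and then the standard delayed-argument computation (change of variable $t\mapsto t+\tau$, splitting at $t_2-\tau$, integration by parts, fundamental lemma on each subinterval) to arrive at \eqref{eq:eldf11}. The only cosmetic difference is that the paper also imposes $h_i(t_2-\tau)=0$ on the variations and phrases the multiplier step as $\nabla(\hat{J}-\lambda\cdot\hat{I})(0,0)=\mathbf{0}$ rather than restricting $\hat{J}$ to the implicit curve, but these are equivalent formulations of the same argument.
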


\begin{proof}
Consider neighboring functions of the form
\begin{equation}
\label{admfunct} \hat{q}(t)=q(t)+\epsilon_1h_1(t)+\epsilon_2h_2(t),
\end{equation}
where for each $i\in\{1,2\}$ $\epsilon_i$ is a sufficiently small parameter, $h_i$ are assumed to be functions of class
$C^2\left([t_{1}-\tau,t_{2}]\right)$,
 $h_i(t)=0$ for $t\in[t_{1}-\tau,t_{1}]$ and $h_i(t_2)=0$.

First, we will show that (\ref{admfunct}) has a subset of admissible functions for the variational isoperimetric problem with time delay. Consider the quantity
$$
I^{\tau}[\hat{q}(\cdot)]\\=\int_{t_1}^{t_2} g(t,\hat{q}(t),
\dot{\hat{q}}(t),\hat{q}(t-\tau),\dot{\hat{q}}(t-\tau))dt.
$$
Then we can regard $I^{\tau}[\hat{q}(\cdot)]$ as a function of $\epsilon_1$ and $\epsilon_2$. Define
$\hat{I}(\epsilon_1,\epsilon_2)=I^{\tau}[\hat{q}(\cdot)]-l$.
Thus,
\begin{equation}
\label{implicit1}
\hat I(0,0)=0.
\end{equation}
On the other hand, we have
\begin{align*}
\left.\frac{\partial \hat I}{\partial \epsilon_2} \right|_{(0,0)}
&=\int_{t_1}^{t_2}\left[
\partial_2g[q]_{\tau}(t)\cdot h_2(t)+\partial_3g[q]_{\tau}(t)\cdot\dot{h}_2(t)\right]dt\nonumber\\&+
\int_{t_1}^{t_2}\left[
\partial_4g[q]_{\tau}(t)\cdot h_2(t-\tau)+\partial_5g[q]_{\tau}(t)\cdot\dot{h}_2(t-\tau)\right]dt\,.
\end{align*}

Using the change of variable $t = s + \tau$ in the second integral and recalling that $h_2$ is null in $[t_1-\tau, t_1]$, we obtain that
\begin{align}
\left.\frac{\partial \hat I}{\partial \epsilon_2} \right|_{(0,0)}
&=\int_{t_1}^{t_2}\left[
\partial_2g[q]_{\tau}(t)\cdot h_2(t)+\partial_3g[q]_{\tau}(t)\cdot\dot{h}_2(t)\right]dt\nonumber\\&+
\int_{t_1}^{t_2-\tau}\left[
\partial_4g[q]_{\tau}(t+\tau)\cdot h_2(t)+\partial_5g[q]_{\tau}(t+\tau)\cdot\dot{h}_2(t)\right]dt\,.
\label{rui0}
\end{align}

Applying integration by parts and since equation \eqref{rui0} holds
for all admissible variations $h_2$ such that $h_2 =$ 0 for all
$t\in [t_1-\tau, t_1]\,,$ we get
\begin{align*}
\left.\frac{\partial \hat I}{\partial \epsilon_2} \right|_{(0,0)}
&=\int_{t_1}^{t_2-\tau}\left[
\partial_2g[q]_{\tau}(t)-\frac{d}{dt}\partial_3g[q]_{\tau}(t)+\partial_4g[q]_{\tau}(t+\tau)-\frac{d}{dt}\partial_5g[q]_{\tau}(t+\tau)\right]\cdot
h_2(t)dt\\
&+\int^{t_2}_{t_2-\tau}\left[\partial_2g[q]_{\tau}(t)-\frac{d}{dt}\partial_3g[q]_{\tau}(t)\right]\cdot
h_2(t)dt+\partial_5g[q]_{\tau}(t_2)h_2(t_2-\tau).
\end{align*}

Now, if we restrict ourselves to those admissible variations $h_2$
such that $h_2 =$ 0 for all $t\in [t_1, t_2-\tau]\,,$ we obtain
\begin{align*}
\left.\frac{\partial \hat I}{\partial \epsilon_2} \right|_{(0,0)}
&=\int_{t_2-\tau}^{t_2}\left[
\partial_2g[q]_{\tau}(t)-\frac{d}{dt}\partial_3g[q]_{\tau}(t)\right]\cdot h_2(t)dt\,.
\label{rui011}
\end{align*}

Since $q(\cdot)$ is not an extremal for problem \eqref{CT}--\eqref{Pe2}, by the
fundamental lemma of the calculus of variations (see, \textrm{e.g.}, \cite{Bruce:book}), there exists a function $h_2$ such that
\begin{equation}
\label{implicit2}
\left.\frac{\partial \hat I}{\partial \epsilon_2} \right|_{(0,0)}\neq 0.
\end{equation}
Using (\ref{implicit1}) and (\ref{implicit2}), the implicit function
theorem asserts that there exists a function $\epsilon_2(\cdot)$,
defined in a neighborhood of zero, such that $\hat
I(\epsilon_1,\epsilon_2(\epsilon_1))=0$. Consider the real function
$\hat J(\epsilon_1,\epsilon_2)=J^{\tau}[\hat{q}(\cdot)]$. By
hypothesis, $\hat J$ has minimum (or maximum) at $(0,0)$ subject to
the constraint $\hat I(0,0)=0$, and we have proved that $\nabla \hat
I(0,0)\neq \textbf{0}$. Then, we can appeal to the Lagrange multiplier
rule (see, \textrm{e.g.}, \cite[p.~77]{Bruce:book}) to assert the
existence of a number $\lambda$ such that $\nabla(\hat
J(0,0)-\lambda \cdot\hat I(0,0))=\textbf{0}$. Repeating the
calculations as before,

\begin{align*}
\left.\frac{\partial \hat J}{\partial \epsilon_1} \right|_{(0,0)}
&=\int_{t_1}^{t_2-\tau}\left[
\partial_2L[q]_{\tau}(t)-\frac{d}{dt}\partial_3L[q]_{\tau}(t)\right.\\
&+
\left.\partial_4L[q]_{\tau}(t+\tau)-\frac{d}{dt}\partial_5L[q]_{\tau}(t+\tau)\right]\cdot
h_1(t)dt\\
&+
\int_{t_2-\tau}^{t_2}\left[
\partial_2L[q]_{\tau}(t)-\frac{d}{dt}\partial_3L[q]_{\tau}(t)\right]\cdot h_1(t)dt+\partial_5L[q]_{\tau}(t_2)h_1(t_2-\tau),
\end{align*}
and
\begin{align*}
\left.\frac{\partial \hat I}{\partial \epsilon_1} \right|_{(0,0)}
&=\int_{t_1}^{t_2-\tau}\left[
\partial_2g[q]_{\tau}(t)-\frac{d}{dt}\partial_3g[q]_{\tau}(t)\right.\\
&+
\left.\partial_4g[q]_{\tau}(t+\tau)-\frac{d}{dt}\partial_5g[q]_{\tau}(t+\tau)\right]\cdot
h_1(t)dt\\
&+\int_{t_2-\tau}^{t_2}\left[
\partial_2g[q]_{\tau}(t)-\frac{d}{dt}\partial_3g[q]_{\tau}(t)\right]\cdot h_1(t)dt+\partial_5g[q]_{\tau}(t_2)h_1(t_2-\tau)\,.
\end{align*}

Therefore, since $q(t)$ is an extremizer to Problem~\ref{Pb1},  for all admissible variations $h_1$ such that $h_1 =0$  for all $t\in [t_2-\tau, t_2]\,,$ we get
\begin{align}\label{rui01}
&\int_{t_1}^{t_2-\tau}\left[
\partial_2L[q]_{\tau}(t)+
\partial_4L[q]_{\tau}(t+\tau)-\frac{d}{dt}\left(\partial_3L[q]_{\tau}(t)
+\partial_5L[q]_{\tau}(t+\tau)\right)\right.\nonumber\\
&\left.-\lambda\cdot\left(\partial_2g[q]_{\tau}(t)+\partial_4g[q]_{\tau}(t+\tau)
-\frac{d}{dt}\left(\partial_3g[q]_{\tau}(t)
+\partial_5g[q]_{\tau}(t+\tau)\right)\right)\right]\cdot h_1(t)dt=0,
\end{align}
and for all admissible variations $h_1$ such that $h_1 =0$  for all $t\in [t_1, t_2-\tau]\,,$ we get
\begin{align}\label{rui011}
&\int_{t_2-\tau}^{t_2}\left[
\partial_2L[q]_{\tau}(t)-\frac{d}{dt}\partial_3L[q]_{\tau}(t)
-\lambda\cdot\left(\partial_2g[q]_{\tau}(t)
-\frac{d}{dt}\partial_3g[q]_{\tau}(t) \right)\right]\cdot
h_1(t)dt=0\,.
\end{align}

Since equation \eqref{rui01} (and \eqref{rui011}) holds for any
function $h_1$ satisfying $h_1 =0$  for all $t\in [t_2-\tau, t_2]$ (and $h_1 =0$  for all $t\in [t_1, t_2-\tau]$), from the fundamental lemma of the calculus of
variations (see, e.g., [12]), we obtain equations (\ref{eq:eldf11}).
\end{proof}

\begin{remark}
\label{re:EL} If one extends the set of admissible functions in
Problem~\ref{Pb1} to the class of Lipschitz continuous functions,
then the Euler--Lagrange equations (\ref{eq:eldf11}) remain valid
(cf. \cite{GF2013}).
\end{remark}

\begin{definition}[Isoperimetrc extremals with time delay] The solutions
$q(\cdot)\in C^2\left([t_{1}-\tau,t_{2}]\right)$ of the
Euler--Lagrange equations (\ref{eq:eldf11}) are called
\emph{isoperimetric extremals with time delay}.
\end{definition}

\begin{remark} Note that if there is no time delay, that is, if $\tau = 0$, then Problem~\ref{Pb1}
reduces to the classical isoperimetric variational problem:
\begin{gather*}
J[q(\cdot)] =\int_{t_1}^{t_2} L\left(t,q(t),\dot{q}(t)\right) dt
\longrightarrow \min, \label{isocv}\\
\int_{t_1}^{t_2} g\left(t,q(t),\dot{q}(t)\right) dt=l \,.\label{CT1}
\end{gather*}
\end{remark}

\subsection{The DuBois–-Reymond necessary condition}

\label{iso1}
The following theorem gives a generalization of the
DuBois--Reymond necessary condition for classical variational
problems [4] and generalizes the Dubois--Reymond necessary condition
for isoperimetric variational problems with time delay of [11].

\begin{theorem}[Isoperimetric DuBois--Reymond necessary condition with time delay]
\label{theo:cdrnd} If $q(\cdot)$ is an isoperimetric extremals with time delay such that
\begin{equation}\label{CDUR}
\partial_4F[q]_{\tau}(t+\tau)\cdot \dot{q}(t)+\partial_5F[q]_{\tau}(t+\tau)\cdot
\ddot{q}(t)=0
\end{equation}
for all $t\in[t_1-\tau,t_2-\tau]$, then it satisfies the following
conditions:
\begin{equation}
\label{eq:cdrnd}
\frac{d}{dt}\left\{F[q]_{\tau}(t)-\dot{q}(t)\cdot(\partial_{3}
F[q]_{\tau}(t) +\partial_{5} F[q]_{\tau}(t+\tau))\right\} =
\partial_{1} F[q]_{\tau}(t)
\end{equation}
for $t_1\leq t\leq t_{2}-\tau$, and
\begin{equation}
\label{eq:cdrnd1} \frac{d}{dt}\left\{F[q]_{\tau}(t)
-\dot{q}(t)\cdot\partial_{3} F[q]_{\tau}(t)\right\} =\partial_{1}
F[q]_{\tau}(t)
\end{equation}
for $t_2-\tau< t\leq t_{2}$, where $F$ is defined in
\eqref{eq:aug:Lag}.
\end{theorem}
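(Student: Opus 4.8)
The plan is to mimic the proof of the delayed DuBois--Reymond condition (Theorem~\ref{theo:cdrnd0}) with the augmented Lagrangian $F = L - \lambda\cdot g$ playing the role of $L$. The crucial observation is that, since $q(\cdot)$ is by hypothesis an isoperimetric extremal, it satisfies the isoperimetric Euler--Lagrange equations \eqref{eq:eldf11}, which are exactly the delayed Euler--Lagrange equations written for $F$. Hence $F$ behaves like an ordinary delayed Lagrangian whose extremal is $q(\cdot)$, and the whole argument reduces to a direct differentiation plus the bookkeeping of the time shifts.

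First I would treat the interval $t_1 \le t \le t_2-\tau$. Applying the chain rule to $\frac{d}{dt}F[q]_\tau(t)$ produces $\partial_1 F[q]_\tau(t) + \partial_2 F[q]_\tau(t)\cdot\dot q(t) + \partial_3 F[q]_\tau(t)\cdot\ddot q(t) + \partial_4 F[q]_\tau(t)\cdot\dot q(t-\tau) + \partial_5 F[q]_\tau(t)\cdot\ddot q(t-\tau)$. Next I would expand $\frac{d}{dt}\{\dot q(t)\cdot(\partial_3 F[q]_\tau(t) + \partial_5 F[q]_\tau(t+\tau))\}$ by the product rule and substitute the first line of \eqref{eq:eldf11} for $\frac{d}{dt}(\partial_3 F[q]_\tau(t) + \partial_5 F[q]_\tau(t+\tau))$. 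Subtracting, the $\partial_2 F\cdot\dot q$ and $\partial_3 F\cdot\ddot q$ contributions cancel, leaving $\partial_1 F[q]_\tau(t)$ together with the delay-coupling combination $[\partial_4 F[q]_\tau(t)\cdot\dot q(t-\tau) + \partial_5 F[q]_\tau(t)\cdot\ddot q(t-\tau)] - [\partial_4 F[q]_\tau(t+\tau)\cdot\dot q(t) + \partial_5 F[q]_\tau(t+\tau)\cdot\ddot q(t)]$.

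The step I expect to require the most care is disposing of this bracketed combination, which is precisely where hypothesis \eqref{CDUR} enters. The second bracket vanishes at once by \eqref{CDUR}, since $t\in[t_1,t_2-\tau]\subseteq[t_1-\tau,t_2-\tau]$. For the first bracket I would perform the shift $s = t-\tau$, observing that $\partial_4 F[q]_\tau(t)\cdot\dot q(t-\tau) + \partial_5 F[q]_\tau(t)\cdot\ddot q(t-\tau)$ coincides with $\partial_4 F[q]_\tau(s+\tau)\cdot\dot q(s) + \partial_5 F[q]_\tau(s+\tau)\cdot\ddot q(s)$; since $t\in[t_1,t_2-\tau]$ forces $s\in[t_1-\tau,t_2-2\tau]\subseteq[t_1-\tau,t_2-\tau]$, this vanishes as well. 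What survives is exactly \eqref{eq:cdrnd}. Checking that the shifted arguments remain inside $[t_1-\tau,t_2-\tau]$, which uses $\tau < t_2-t_1$, is the only genuinely delicate part of the bookkeeping.

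Finally I would handle the interval $t_2-\tau \le t \le t_2$ in the same fashion, but using the second line of \eqref{eq:eldf11}, namely $\frac{d}{dt}\partial_3 F[q]_\tau(t) = \partial_2 F[q]_\tau(t)$. After the analogous cancellation only $\partial_1 F[q]_\tau(t) + \partial_4 F[q]_\tau(t)\cdot\dot q(t-\tau) + \partial_5 F[q]_\tau(t)\cdot\ddot q(t-\tau)$ remains; the same shift $s=t-\tau$ (now with $s\in[t_2-2\tau,t_2-\tau]\subseteq[t_1-\tau,t_2-\tau]$) together with \eqref{CDUR} eliminates the last two terms and yields \eqref{eq:cdrnd1}, completing the argument.
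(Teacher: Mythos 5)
Your proposal is correct and follows essentially the same route as the paper's proof: expand the derivative via the chain and product rules, substitute the isoperimetric Euler--Lagrange equations \eqref{eq:eldf11}, and invoke hypothesis \eqref{CDUR} twice --- once directly and once after the shift $s=t-\tau$ (whose admissibility, as you note, rests on $\tau<t_2-t_1$). The only cosmetic difference is that the paper carries out the computation under an integral $\int_{t_1}^{x}$ and concludes by the arbitrariness of $x\in[t_1,t_2-\tau]$, whereas you differentiate pointwise, which is legitimate for $C^2$ isoperimetric extremals.
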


\begin{proof}
We only prove the theorem in the interval $t_{1}\leq t\leq
t_{2}-\tau$ (the proof is similar in the interval $t_{2}-\tau<
t\leq t_{2}$). We derive equation \eqref{eq:cdrnd} as follows:

Let an arbitrary  $x\in [t_1, t_2-\tau]\,.$ Note that
\begin{equation}
\label{pro}
\begin{split}
\int_{t_1}^{x}\frac{d}{dt}&\left[F[q]_{\tau}(t)-\dot{q}(t)\cdot(\partial_{3}
F[q]_{\tau}(t)
+\partial_{5} F[q]_{\tau}(t+\tau))\right]dt\\
&=\int_{t_1}^{x}\Bigr[\partial_1 \left(L[q]_{\tau}(t)-\lambda\cdot
g[q]_{\tau}(t)\right)+
\partial_2 \left(L[q]_{\tau}(t)-\lambda\cdot
g[q]_{\tau}(t)\right)\cdot \dot{q}(t)\\
&-\partial_{5} \left(L[q]_{\tau}(t+\tau)-\lambda\cdot
g[q]_{\tau}(t+\tau)\right)\cdot\ddot{q}(t)\\
&\qquad -\frac{d}{dt}\left\{\partial_{3}
\left(L[q]_{\tau}(t)-\lambda\cdot g[q]_{\tau}(t)\right)
+\partial_{5} \left(L[q]_{\tau}(t+\tau)-\lambda\cdot
g[q]_{\tau}(t+\tau)\right)\right\}\cdot \dot{q}(t)\Bigr]dt\\
&\qquad +\int_{t_1}^{x}\left[\partial_4
\left(L[q]_{\tau}(t)-\lambda\cdot
g[q]_{\tau}(t)\right)\cdot\dot{q}(t-\tau)\right.\\
&\left. +\partial_5 \left(L[q]_{\tau}(t)-\lambda\cdot
g[q]_{\tau}(t)\right)\cdot\ddot{q}(t-\tau)\right]dt.
\end{split}
\end{equation}
Observe that, by hypothesis \eqref{CDUR}, the last integral of
\eqref{pro} is null and by substituting the Euler--Lagrange equation
with time delay \eqref{eq:eldf11}, the equation \eqref{pro} becomes
\begin{equation*}
\label{eq3}
\begin{split}
\int_{t_1}^{x}\frac{d}{dt}&\left[F[q]_{\tau}(t)-\dot{q}(t)\cdot(\partial_{3}
F[q]_{\tau}(t)
+\partial_{5} F[q]_{\tau}(t+\tau))\right]dt\\
&=\int_{t_1}^{x}\Bigl(\partial_1
\left(L[q]_{\tau}(t)-\lambda\cdot g[q]_{\tau}(t)\right)\\
& -\left[\partial_4 \left(L[q]_{\tau}(t)-\lambda\cdot
g[q]_{\tau}(t)\right)\cdot\dot{q}(t) +\partial_5
\left(L[q]_{\tau}(t+\tau)-\lambda\cdot
g[q]_{\tau}(t+\tau)\right)\cdot\ddot{q}(t)\right]\Bigr)dt.
\end{split}
\end{equation*}
Using hypothesis \eqref{CDUR} in the right-hand side of the last equation, we conclude that
\begin{multline}
\label{eq31}
\int_{t_1}^{x}\frac{d}{dt}\left[F[q]_{\tau}(t)-\dot{q}(t)\cdot(\partial_{3}
F[q]_{\tau}(t) +\partial_{5} F[q]_{\tau}(t+\tau))\right]dt\\
=\int_{t_1}^{x}\Bigl(\partial_1 \left(L[q]_{\tau}(t)-\lambda\cdot
g[q]_{\tau}(t)\right)\Bigr)dt\,.
\end{multline}

We finally obtain \eqref{eq:cdrnd} by the arbitrariness
$x\in[t_{1},t_{2}-\tau]\,.$
\end{proof}

\begin{remark}
If we assume that admissible functions in Problem~\ref{Pb1} are Lipschitz continuous, then one can show that the DuBois--Reymond necessary conditions with time delay \eqref{eq:cdrnd}--\eqref{eq:cdrnd1} are still valid (cf. \cite{GF2013}).
\end{remark}

\subsection{Existence and Regularity of a minimizer}

In this section, we prove a theorem, analogous to the classical Tonelli theorem, ensuring the existence of a minimizer for  Problem~\ref{Pb1}, and we will show that full regularity holds for certain variational integrals of the form $J^{\tau}(q,\lambda)\,.$

For this, we need the following definitions:
\begin{definition}
We say that $J^{\tau}(q,\lambda)$  is \emph{coercive} on $\mathbb{U}$ if for any $\lambda\in \mathbb{R}^k$
\begin{equation*}
\lim\limits_{\substack{\Vert q \Vert_{\mathrm{W}^{1,p}}
\to \infty \\ q \in \mathbb{U} }} J^{\tau} (q,\lambda) = +\infty\,.
\end{equation*}
\end{definition}

\begin{definition}
We say that $F$ is \emph{regular} if
\begin{itemize}
\item $F[q]_{\tau}(t) \in \mathrm{L}^1$;
\item $\partial_2 F[q]_{\tau}(t) \in \mathrm{L}^1$;
\item $\partial_3 F[q]_{\tau}(t) \in \mathrm{L}^{p'}$;
\item $\partial_4F[q]_{\tau}(t) \in \mathrm{L}^{1}$;
\item $\partial_5 F[q]_{\tau}(t) \in \mathrm{L}^{p'}$;
\end{itemize}
for any $q\in\mathrm{W}^{1,p}$.
\end{definition}

Next, we state a Tonelli-type theorem for Lagrangian functionals containing time delay. For this to happen we adopt an extension of the method used in \cite{MR0688142,MR2361288}.

\begin{theorem}[Tonelli's existence theorem for isoperimetric variational problems with time delay]
\label{thmtonelli}
Let us assume the following hypotheses:
\begin{itemize}
\item ($H_1$) $F[q]_{\tau}(t)$ is regular;
\item ($H_2$) $J^{\tau}(q,\lambda)$ is coercive on $\mathbb{U}$;
\item ($H_3$) $F[q]_{\tau}(t)$ is convex on $(\mathbb{R}^n)^4$ for any $t \in [t_1,t_2]\,.$
\end{itemize}
Then there exists a minimizer for Problem~\ref{Pb1}.
\end{theorem}

\begin{proof}
$J^{\tau}(q,\lambda)$ is well-defined because, since $F[q]_{\tau}(t)$ is regular,
$F[q]_{\tau}(t) \in \mathrm{L}^1$
and $J^{\tau}(q,\lambda)$ exists in $\mathbb{R}$.
Let $(q_n)_{n \in \mathbb{N}} \subset \mathbb{U}$
be a minimizing sequence satisfying
\begin{equation}
\label{eq:1}
J^{\tau} (q_n,\lambda) \longrightarrow \inf\limits_{q \in \mathbb{U}} J^{\tau}(q,\lambda) < +\infty.
\end{equation}
($H_2$) implies that $(q_n)_{n \in \mathbb{N}}$ is bounded  in $\mathrm{W}^{1,p}\,.$ Since $\mathrm{W}^{1,p}$ is a reflexive Banach space, there exists  a subsequence of $(q_n)_{n \in \mathbb{N}}$ weakly convergent to $\tilde{q}$ in $\mathrm{W}^{1,p}\,.$ We denote this subsequence by $(\tilde{q}_n)_{n \in \mathbb{N}}\,.$
 Furthermore, since $\mathbb{U}$ is a weakly closed subset of $\mathrm{W}^{1,p}$, $\tilde{q} \in \mathbb{U}$.

Now, from ($H_3$) we have
\begin{multline}
\label{eq00}
J^{\tau}(\tilde{q}_n,\lambda) \geq J^{\tau}(\tilde{q},\lambda) +  \int_{t_1}^{t_2}\left[\partial_2 F[\tilde{q}]_{\tau}(t) \cdot (\tilde{q}_n-\tilde{q})+
\partial_3F[\tilde{q}]_{\tau}(t) \cdot (\dot{\tilde{q}}_n-\dot{\tilde{q}})\right.\\ \left. + \partial_4 F[\tilde{q}]_{\tau}(t) \cdot (\tilde{q}_n-\tilde{q})+
\partial_5F[\tilde{q}]_{\tau}(t) \cdot (\dot{\tilde{q}}_n-\dot{\tilde{q}})\right]\, dt\,,
\end{multline}
and from the following assumptions:
\begin{enumerate}
  \item $F[\tilde{q}]_{\tau}(t)$ is regular;
  \item $\tilde{q}_n \longrightarrow \tilde{q}$  in ${\mathrm{W}^{1,p}} $;
  \item the compact embedding $\mathrm{W}^{1,p} \hookrightarrow \mathcal{C}$ holds;
\end{enumerate}
one can conclude that
\begin{description}
  \item[(i)] $\partial_2 F[\tilde{q}]_{\tau}(t) \in \mathrm{L}^1$ and $\tilde{q}_n \longrightarrow \tilde{q}$ in $\mathrm{L}^{\infty}\,;$
  \item[(ii)] $\partial_3 F[\tilde{q}]_{\tau}(t) \in \mathrm{L}^{p'}$ and $\dot{\tilde{q}}_n \rightharpoonup \dot{\tilde{q}}$ in $\mathrm{L}^{p}\,;$
  \item[(iii)] $\partial_4 F[\tilde{q}]_{\tau}(t) \in \mathrm{L}^1$ and $\tilde{q}_n \longrightarrow \tilde{q}$ in $\mathrm{L}^{\infty}\,;$
  \item[(iv)] $\partial_5 F[\tilde{q}]_{\tau}(t) \in \mathrm{L}^{p'}$ and $\dot{\tilde{q}}_n \rightharpoonup \dot{\tilde{q}}$ in $\mathrm{L}^{p}\,.$
\end{description}
To complete the proof, we use  \eqref{eq:1} and we take $n \to \infty$ in inequality \eqref{eq00}, to obtain
\begin{equation*}
\inf\limits_{q \in \mathbb{U}} J^{\tau}(q,\lambda) \geq J^{\tau}(\tilde{q},\lambda) \in \mathbb{R}\,.
\end{equation*}
\end{proof}

\begin{theorem}[Regualarity Theorem]\label{th:RE} Let $I=(t_1-\tau,t_2)$ be a bounded interval in $\mathbb{R}$, and let $F(t,z,v,z_{\tau},v_{\tau})$ be a Lagrngian of class $\mathcal{C}^2$ defined in
$\bar{I}\times\mathbb{R}^n\times\mathbb{R}^n\times\mathbb{R}^n\times\mathbb{R}^n$, $n\geq 1$, satisfying the following conditions:
\begin{enumerate}
 \item there are constants $c_{0}, c_{1}> 0$ such that for all $(t,z,v,z_{\tau},v_{\tau})\in \bar{I}\times\mathbb{R}^n\times\mathbb{R}^n\times\mathbb{R}^n\times\mathbb{R}^n$
\begin{equation}\label{r1}
 c_{0}\left(\parallel v\parallel^m+\parallel v_\tau\parallel^m \right)\leq F(t,z,v,z_{\tau},v_{\tau})\leq c_1\left(1+\parallel v\parallel^m+\parallel v_\tau\parallel^m\right),\,m\geq1\,;
\end{equation}
\item there is a function $M(R)>0$ such that
\begin{multline}\label{r2}
\parallel \partial_2F(t,z,v,z_{\tau},v_{\tau})\parallel+\parallel \partial_3F(t,z,v,z_{\tau},v_{\tau})\parallel\\
+\parallel \partial_4F(t,z,v,z_{\tau},v_{\tau})\parallel+\parallel \partial_5F(t,z,v,z_{\tau},v_{\tau})\parallel\leq M(R)\left(1+\parallel v\parallel^2+\parallel v_\tau\parallel^2\right)\,;
\end{multline}
for all $(t,z,v,z_{\tau},v_{\tau})\in \bar{I}\times\mathbb{R}^n\times\mathbb{R}^n\times\mathbb{R}^n\times\mathbb{R}^n$ with $t^2+\parallel z\parallel^2
+\parallel z_\tau\parallel^2\leq R^2\,;$
\item \begin{equation}\label{r8}
\partial_{33}F(t,z,v,z_{\tau},v_{\tau}),\partial_{55}F(t,z,v,z_{\tau},v_{\tau})>0
\end{equation}  for all $(t,z,v,z_{\tau},v_{\tau})\in \bar{I}\times\mathbb{R}^n\times\mathbb{R}^n\times\mathbb{R}^n\times\mathbb{R}^n\,.$
\end{enumerate}

Let $$\mathfrak{C}:=\left\{q\in \mathrm{W}^{1,m}\left(I,\mathbb{R}^n\right)\,subject\, to\, boundary\, conditions~\eqref{Pe2}\right\}\,.$$
 Suppose that $q$ is a local minimizer of the variational integral
$$J^\tau(q,\lambda)=\int_IF[q,\lambda]_{\tau}(t)dt$$
in $\mathfrak{C}\,.$ Then $q$ belongs to $\mathcal{C}^2\left(\bar{I}\right)$ and satisfies the Euler--Lagrange equations~\eqref{eq:eldf11} on $I\,.$
\end{theorem}

\begin{proof}

We will prove this theorem in four steps:
\begin{itemize}
  \item $\underline{Step\,1}\,:$ $q$ is a weak $W^{1,m}$-isoperimetric extremal.

  First, note that $J^\tau[(q,\lambda]$ is well-defined. Indeed, for all $\nu\in\mathfrak{C}$ and by~\eqref{r1}, we have
  \begin{multline*}
  c_0\int_I\left(\parallel \dot{\nu}(t)\parallel^m+\parallel \dot{\nu}(t-\tau)\parallel^m \right)dt\leq J^\tau[(q,\lambda]\\
  \leq c_1\int_I\left(1+\parallel \dot{\nu}(t)\parallel^m+\parallel \dot{\nu}(t-\tau)\parallel^m \right)dt<\infty\,.
  \end{multline*}

  Let a function $h\in Lip(I,\mathbb{R}^n)$ such that $\parallel h(t)\parallel\leq k$ on $I$, and $\parallel \dot{h}(t)\parallel\leq k$ a.e. on  $I$ for some constant $k>0\,.$ Let $0<\varepsilon_0\leq 1\,,$
  and let $\varepsilon \in \mathbb{R}$ such that $\mid \varepsilon\mid <\varepsilon_0\,.$ The generalized version of the fundamental theorem of calculus asserts that
  \begin{multline*}
  t^2+ \left(q(t) +\varepsilon h(t)\right)^2+\left(q(t-\tau) +\varepsilon h(t-\tau)\right)^2\leq t_2^2+ \left(q(t) +\varepsilon h(t)\right)^2+\left(q(t-\tau) +\varepsilon h(t-\tau)\right)^2\\
  \leq t_2^2+\left[\int_I \parallel\dot{q}(t) \parallel dt+k\right]^2+\left[\int_I \parallel\dot{q}(t-\tau) \parallel dt+k\right]^2<\infty
  \end{multline*}
  for all $t \in \bar{I}\,.$ So there exists $R \in \mathbb{R}$ such that
  \begin{equation*}
    t^2+ \left(q(t) +\varepsilon h(t)\right)^2+\left(q(t-\tau) +\varepsilon h(t-\tau)\right)^2\leq R\,.
  \end{equation*}
  Furthermore,
  \begin{equation*}
    \parallel \dot{q}(t) +\varepsilon \dot{h}(t)\parallel^m +\parallel \dot{q}(t-\tau) +\varepsilon \dot{h}(t-\tau)\parallel^m
    \leq 2^{m-1}\left(\parallel \dot{q}(t)\parallel^m+\parallel \dot{q}(t-\tau)\parallel^m+(2k)^m\right)\,.
  \end{equation*}
  On account of~\eqref{r2} we see that the function
   \begin{multline*}
     \partial_2F(t,q(t)+\varepsilon h(t),\dot{q}(t)+\varepsilon \dot{h}(t),q(t-\tau)+\varepsilon h(t-\tau),\dot{q}(t-\tau)+\varepsilon \dot{h}(t-\tau))\\
    + \partial_3F(t,q(t)+\varepsilon h(t),\dot{q}(t)+\varepsilon \dot{h}(t),q(t-\tau)+\varepsilon h(t-\tau),\dot{q}(t-\tau)+\varepsilon \dot{h}(t-\tau))\\
     + \partial_4F(t,q(t)+\varepsilon h(t),\dot{q}(t)+\varepsilon \dot{h}(t),q(t-\tau)+\varepsilon h(t-\tau),\dot{q}(t-\tau)+\varepsilon \dot{h}(t-\tau))\\
      + \partial_5F(t,q(t)+\varepsilon h(t),\dot{q}(t)+\varepsilon \dot{h}(t),q(t-\tau)+\varepsilon h(t-\tau),\dot{q}(t-\tau)+\varepsilon \dot{h}(t-\tau))
 \end{multline*}
 is a.e. dominated by the $\mathrm{L}^1$-function
 \begin{equation*}
   kM(R)\left[1+2^{m-1}\left(\parallel \dot{q}(t)\parallel^m+\parallel \dot{q}(t-\tau)\parallel^m+(2k)^m\right)\right]\,.
 \end{equation*}
 Then Lebesgue's dominated convergence theorem yields that the function $\Phi(\varepsilon):=J^\tau(q+\varepsilon h,\lambda)$ is  of class $\mathcal{C}^1$ on $(-\varepsilon_0,\varepsilon_0)\,.$ Since $q$ is a local minimizer of $J^\tau[(q,\lambda]$ in $\mathfrak{C}$, it follows that $\Phi(0)\leq \Phi(\varepsilon)$  for $\mid\varepsilon\mid<\varepsilon_0$ if $h\in\mathcal{C}_c^\infty\left(I,\mathbb{R}^n\right)$,  whence $\Phi(0)=0$ and therefore
 \begin{multline}\label{r3}
     \int_I\left\{\partial_2F(t,q(t),\dot{q}(t),q(t-\tau),\dot{q}(t-\tau))\cdot h(t)+ \partial_3F(t,q(t),\dot{q}(t),q(t-\tau),\dot{q}(t-\tau))\cdot \dot{h}(t)\right.\\\left.
    + \partial_4F(t,q(t),\dot{q}(t),q(t-\tau),\dot{q}(t-\tau))\cdot h(t-\tau)\right.\\\left.
    + \partial_5F(t,q(t),\dot{q}(t),q(t-\tau),\dot{q}(t-\tau))\cdot \dot{h}(t-\tau)\right\}dt=0
 \end{multline}
for all $h\in\mathcal{C}_c^\infty\left(I,\mathbb{R}^n\right)\,.$ Equation~\eqref{r3} is the formula of the first variation of $J^\tau[(q,\lambda]$, it follows
that $q\in AC\left(I,\mathbb{R}^n\right)$ with $\dot{q}\in \mathrm{L}^m\left(I,\mathbb{R}^n\right)$ is a weak $\mathrm{W}^{1,m}$-extremal.

  \item $\underline{Step\,2}\,:$ $q\in \mathcal{C}^1\left(\bar{I},\mathbb{R}^n\right)$.

  We can show that equation~\eqref{r3} is equivalent to (see the proof of Theorem~\ref{Thm:FractELeq1})
  \begin{align}
0
&=\int_{t_1}^{t_2}\left[
\partial_2F[q,\lambda]_{\tau}(t)\cdot h(t)+\partial_3F[q,\lambda]_{\tau}(t)\cdot\dot{h}(t)\right]dt\nonumber\\&+
\int_{t_1}^{t_2-\tau}\left[
\partial_4F[q,\lambda]_{\tau}(t+\tau)\cdot h(t)+\partial_5F[q,\lambda]_{\tau}(t+\tau)\cdot\dot{h}(t)\right]dt\,.
\label{r5}
\end{align}

 From now on we only focus  in the interval $t_{2}-\tau<
t\leq t_{2}$ (the proof is similar in the interval $t_{1}\leq t\leq
t_{2}-\tau$).
  By virtue of~\eqref{r2}, we have $$\partial_2F[q,\lambda]_{\tau}(t)\in \mathrm{L}^1\left((t_2-\tau,t_2),\mathbb{R}^n\right)\,.$$
 On account the boundary conditions of $h$ (the same as in the proof of Theorem~\ref{Thm:FractELeq1}), an integration by parts leads to
  \begin{multline}\label{r6}
    \int_{t_{2}-\tau}^{t_2}\partial_2F[q,\lambda]_{\tau}(t)\cdot h(t)dt=\left[h(t)\cdot\int_{t_{2}-\tau}^t\partial_2F[q,\lambda]_{\tau}(x)dx \right]_{t_2-\tau}^{t_2}\\
    - \int_{t_{2}-\tau}^{t_2}\left(\int_{t_{2}-\tau}^t\partial_2F[q,\lambda]_{\tau}(x)dx\right)\cdot \dot{h}(t)dt= - \int_{t_{2}-\tau}^{t_2}\left(\int_{t_{2}-\tau}^t\partial_2F[q,\lambda]_{\tau}(x)dx\right)\cdot \dot{h}(t)dt
  \end{multline}
  for all $h\in\mathcal{C}_c^\infty\left(I,\mathbb{R}^n\right)\,.$
  Combining~\eqref{r6} with ~\eqref{r5} in the interval $t_{2}-\tau<
t\leq t_{2}$, we get
\begin{equation*}
  \int_{t_{2}-\tau}^{t_2}\left[\partial_3F[q,\lambda]_{\tau}(t)
  - \int_{t_{2}-\tau}^t\partial_2F[q,\lambda]_{\tau}(x)dx\right]\cdot \dot{h}(t)dt=0\,.
\end{equation*}
Then we can apply the DuBois--Reymond's Lemma (see  \cite{Bot} Section $(1.1)$, Propositions $(1.9)$), to show that there
is a vector $c\in\mathbb{R}^n$ such that
\begin{equation}\label{r7}
 \partial_3F[q,\lambda]_{\tau}(t)=c+\int_{t_{2}-\tau}^t\partial_2F[q,\lambda]_{\tau}(x)dx\qquad a.e.\,\,on\,\, (t_{2}-\tau,t_2)\,.
\end{equation}

By virtue of~\eqref{r2}, we have $$\partial_3F[q,\lambda]_{\tau}(t)\in \mathrm{L}^1\left((t_2-\tau,t_2),\mathbb{R}^n\right)\,.$$

 Consider the mapping
 $$\Psi(t,z,v,z_\tau,v_\tau)=(t,z,\partial_3F(t,z,v,z_{\tau},v_{\tau}),z_\tau,\partial_5F(t,z,v,z_{\tau},v_{\tau}))\,.$$

 Because condition~\eqref{r8}, we have that $$D\Psi(t,z,v,z_{\tau},v_{\tau})\neq0$$ for all
  $$(t,z,v,z_{\tau},v_{\tau})\in \left(\overline{(t_2-\tau,t_2)}\times\mathbb{R}^n\times\mathbb{R}^n\times\mathbb{R}^n\times\mathbb{R}^n\right)\,,$$
  so $\Psi$ is injective. By the Global Inverse Function Theorem (see for example \cite{CD:kolk:2004}), we have that $$\Psi:\left(\overline{(t_2-\tau,t_2)}\times\mathbb{R}^n\times\mathbb{R}^n\times\mathbb{R}^n\times\mathbb{R}^n\right)\rightarrow \Psi\left(\overline{(t_2-\tau,t_2)}\times\mathbb{R}^n\times\mathbb{R}^n\times\mathbb{R}^n\times\mathbb{R}^n\right)$$
  is a $\mathcal{C}^1$-diffeomorphism. Moreover, condition~\eqref{r8}   implies that $$\partial_3F(t,z,v,z_{\tau},v_{\tau}),\partial_5F(t,z,v,z_{\tau},v_{\tau})$$ are injective, and thus
  $$\Psi\left(\overline{(t_2-\tau,t_2)}\times\mathbb{R}^n\times\mathbb{R}^n\times\mathbb{R}^n\times\mathbb{R}^n\right)= \overline{(t_2-\tau,t_2)}\times\mathbb{R}^n\times\mathbb{R}^n\times\mathbb{R}^n\times\mathbb{R}^n\,.$$

  Define
  \begin{gather*}
  \pi(t)=c+\int_{t_{2}-\tau}^t\partial_2F[q,\lambda]_{\tau}(x)dx\,;\\
  \pi_1(t)=c_3+\int_{t_{2}-\tau}^t\partial_4F[q,\lambda]_{\tau}(x)dx,\,\,c_3\in\mathbb{R}^n\,;\\
  \sigma(t)=(t,q(t),\dot{q}(t),q(t-\tau),\dot{q}(t-\tau))\,;\\
  \varrho(t)=(t,q(t),\pi(t),q(t-\tau),\pi_1(t))\,.
  \end{gather*}
  Then $\sigma$ is defined a.e. on $(t_2-\tau,t_2)$, whereas $\varrho$ is defined  for all $t\in \overline{(t_2-\tau,t_2)}\,.$ Moreover,
  \begin{equation}\label{r10}
    \Psi(\sigma(t))=\varrho(t)\quad a.e.\quad on\,\, (t_2-\tau,t_2)\,.
  \end{equation}
  The image set $\varrho\left(\overline{(t_2-\tau,t_2)}\right)$ lies the range of $\Psi$, and $\varrho$ is continuous on $\overline{(t_2-\tau,t_2)}\,.$
  Thus the function
  $$(t,q(t),\nu(t),q(t-\tau),\nu_1(t))=\Psi^{-1}(\varrho(t))\,,\qquad t\in \overline{(t_2-\tau,t_2)}\,,$$
  is well-defined and continuous.
  On the other hand, \eqref{r10} implies that
 $$(t,q(t),\dot{q}(t),q(t-\tau),\dot{q}(t-\tau))=\sigma(t)=\Psi^{-1}(\varrho(t))\quad a.e.\quad on\,\,(t_2-\tau,t_2)\,,$$
 and therefore $\dot{q}(t)=\nu(t)$ and $\dot{q}(t-\tau)=\nu_1(t)$  a.e. on $(t_2-\tau,t_2)\,.$
 By a generalization of the fundamental formula of calculus, we have that
 $$q(t)=q(t_2-\tau)+\int_{t_2-\tau}^{t}\dot{q}(x) dx=q(t_2-\tau)+\int_{t_2-\tau}^{t}\nu(x)dx\,,$$
 and
 $$q(t-\tau)=q(t_2-2\tau)+\int_{t_2-\tau}^{t}\dot{q}(x-\tau) dx=q(t_2-2\tau)+\int_{t_2-\tau}^{t}\nu_1(x)dx\,,$$
it follows that $q\in \mathcal{C}^1\left(\overline{(t_2-\tau,t_2)},\mathbb{R}^n\right)\,.$

 \item $\underline{Step\,\,3}\,:$ $q\in \mathcal{C}^2\left(\bar{I},\mathbb{R}^n\right)$.
 
 Because $q\in \mathcal{C}^1\left(\bar{I},\mathbb{R}^n\right)$, we get that
 \begin{equation*}
   \partial_3F[q,\lambda]_{\tau}(t)=\pi(t) \quad for \,\,all\,\,t\in(t_2-\tau,t_2)\,.
 \end{equation*}
 Thus the mapping $\Upsilon(t,v):\overline{(t_2-\tau,t_2)}\times\mathbb{R}^n\longrightarrow\mathbb{R}^n$ defined by
 $$\Upsilon(t,v):=\partial_3F[q,\lambda]_{\tau}(t)-\pi(t)$$
 is  of class $\mathcal{C}^1\left(\overline{(t_2-\tau,t_2)}\times\mathbb{R}^n,\mathbb{R}^n\right)$ because $F$ is of class $\mathcal{C}^2\,.$
 Using condition~\eqref{r8}, we obtain
 $$\frac{\partial\Upsilon}{\partial\dot{q}}=\partial_{33}F>0\,.$$
 for all $t\in\overline{(t_2-\tau,t_2)}\,.$

 Since $v=\dot{q}(t)$, $t\in\overline{(t_2-\tau,t_2)}$, is solution of $\Upsilon(t,v)=0$, the implicit function theorem yields that $\dot{q}\in \mathcal{C}^1\left(\overline{(t_2-\tau,t_2)}\right)$, i.e. $q\in \mathcal{C}^2\left(\overline{(t_2-\tau,t_2)}\right)\,.$

  \item $\underline{Step\,\,4}\,:$ $q$ satisfies the Euler--Lagrange equations~\eqref{eq:eldf11}.

  Since  $q\in \mathcal{C}^2\left(\overline{(t_2-\tau,t_2)}\right)$, we can integrate~\eqref{r5} by parts on $(t_2-\tau,t_2)\,.$ From this we get that
  \begin{equation*}
  \int^{t_2}_{t_2-\tau}\left[-\frac{d}{dt}\partial_{3}F[q,\lambda]_{\tau}(t) +\partial_{2}F[q,\lambda]_{\tau}(t)\right]\cdot h(t)dt
  \end{equation*}
  for all $h\in\mathcal{C}_c^\infty\left((t_2-\tau,t_2),\mathbb{R}^n\right)\,.$ Applying the fundamental lemma of calculus of variations, we obtain
  \begin{equation*}
  -\frac{d}{dt}\partial_{3}F[q,\lambda]_{\tau}(t) +\partial_{2}F[q,\lambda]_{\tau}(t)=0\quad on\,\,(t_2-\tau,t_2)\,.
  \end{equation*}
\end{itemize}
This completes the proof.
\end{proof}

\begin{example} The most classical examples of a Lagrangian are the quadratic ones. Consider the following isoperimetric problem of calculus of variations with time delay:
\begin{equation}
\label{eq:ex22}
\begin{gathered}
J^{\tau}[q(\cdot)]=\int_{t_1}^{t_2}\left(\|q(t)\|^2+\|q(t-\tau)\|^2+\|\dot{q}(t)\|^2+\|\dot{q}(t-\tau)\|^2\right)dt \longrightarrow \min,\\
q(t_2)=q_2,
\end{gathered}
\end{equation}
subject to  isoperimetric equality constraints
\begin{equation}
\label{eq:ex23}
 I^{\tau}[q(\cdot)]=\int_{t_1}^{t_2}\left(
\|\dot{q}(t)\|^2+\|\dot{q}(t-\tau)\|^2\right)dt=l
\end{equation}
in the class of functions
$q(\cdot)\in  W^{1,2}[t_1-\tau,t_2]$.

For this example, the augmented
Lagrangian $F$ is given as
\begin{equation}\label{exm}
    F=\|q(t)\|^2+\|q(t-\tau)\|^2+\|\dot{q}(t)\|^2+\|\dot{q}(t-\tau)\|^2
    -\lambda\cdot\left(\|\dot{q}(t)\|^2+\|\dot{q}(t-\tau)\|^2\right) \,.
\end{equation}

It is not difficult to verify that the augmented Lagrangian \eqref{exm}  is convex and satisfies the hypotheses of the Theorems~\ref{thmtonelli} and ~\ref{th:RE},
and therefore exists a solution $q(\cdot)\in  [t_1-\tau,t_2]$ of the problem \eqref{eq:ex22}--\eqref{eq:ex23} for some suitable values of $\lambda$ and it satisfies the Euler--Lagrange equations~\eqref{eq:eldf11}.

\end{example}

\subsection{Variational isoperimetric Noether's conservation laws with time delay}
\label{sec:example}

In \cite{GF2013} the authors remark that when one extends Noether's theorem to the biggest class of functions for which one can derive the Euler--Lagrange equations, i.e., for Lipschitz continuous functions, then one can find Lipschitz Euler--Lagrange extremals that fail to satisfy the Noether conserved quantity established in \cite{GF2012}.
They show that to formulate a Noether's theorem with time delays for nonsmooth functions it is enough to restrict the set of delayed Euler--Lagrange extremals to those that satisfy the delayed DuBois--Reymond condition.

The notion of invariance given in Definition~\ref{def:invnd} can be extended up to an exact differential.

\begin{definition}[Invariance up to a gauge-term]
\label{def:invndLIP} We say that the functional \eqref{agp} is invariant
under the $s$-parameter group of infinitesimal transformations
\eqref{eq:tinf} up to the gauge-term $\Phi$ if
\begin{multline}
\label{eq:invndLIP} \int_{I} \dot{\Phi}[q]_{\tau}(t)dt =
\frac{d}{ds} \int_{\bar{t}(I)}
F\left(t+s\eta(t,q(t))+o(s),q(t)+s\xi(t,q(t))+o(s),
\frac{\dot{q}(t)+s\dot{\xi}(t,q(t))}{1+s\dot{\eta}(t,q(t))},\right.\\
\left. q(t-\tau)+s\xi(t-\tau,q(t-\tau))+o(s),\frac{\dot{q}(t-\tau)
+s\dot{\xi}(t-\tau,q(t-\tau))}{1+s\dot{\eta}(t-\tau,q(t-\tau))}\right)
(1+s\dot{\eta}(t,q(t))) dt\Biggr|_{s=0}
\end{multline}
for any  subinterval $I \subseteq [t_1,t_2]$ and for all
$q(\cdot)\in Lip\left([t_1-\tau,t_2]\right)\,.$
\end{definition}

\begin{lemma}[Necessary condition of invariance]
\label{thm:CNSI:SCV} If functional \eqref{agp} is invariant up to
$\Phi$ in the sense of Definition~\ref{def:invndLIP}, then
\begin{multline}
\label{eq:cnsind1}
\int_{t_1}^{t_2-\tau}\Bigl[-\dot{\Phi}[q]_{\tau}(t)+\partial_{1}
F[q]_{\tau}(t)\eta(t,q) +\left(\partial_{2}
F[q]_{\tau}(t)+\partial_4 F[q]_{\tau}(t+\tau)\right)\cdot\xi(t,q)\\
+\left(\partial_{3}F[q]_{\tau}(t)
+\partial_5F[q]_{\tau}(t+\tau)\right)\cdot\left(\dot{\xi}(t,q)
-\dot{q}(t)\dot{\eta}(t,q)\right) +
F[q]_{\tau}(t)\dot{\eta}(t,q)\Bigr]dt = 0
\end{multline}
for $t_1\leq t\leq t_{2}-\tau$ and
\begin{multline}
\label{eq:cnsind2}
\int_{t_2-\tau}^{t_2}\Bigl[-\dot{\Phi}[q]_{\tau}(t)
+\partial_{1}F[q]_{\tau}(t)\eta(t,q)
+\partial_{2}F[q]_{\tau}(t)\cdot\xi(t,q)\\
+\partial_{3}F[q]_{\tau}(t)\cdot\left(\dot{\xi}(t,q)
-\dot{q}(t)\dot{\eta}(t,q)\right)+F[q]_{\tau}(t)\dot{\eta}(t,q)\Bigr]dt
=0
\end{multline}
for $t_2-\tau\leq t\leq t_{2}$.
\end{lemma}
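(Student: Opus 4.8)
The plan is to prove the necessary condition of invariance by directly expanding the invariance definition in $s$ and evaluating the derivative at $s=0$. Starting from the right-hand side of \eqref{eq:invndLIP}, I would first handle the change of variables introduced by $\bar t = t + s\eta + o(s)$: the domain $\bar t(I)$ depends on $s$, but since the integrand carries the Jacobian factor $(1+s\dot\eta(t,q(t)))$, I can pull everything back to the fixed interval $I$ and differentiate under the integral sign. The differentiation with respect to $s$, evaluated at $s=0$, is then a routine application of the chain rule to the five arguments of $F$.

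The key computational step is tracking each of the five partial derivatives. Differentiating the first argument $t+s\eta+o(s)$ produces $\partial_1 F[q]_\tau(t)\,\eta(t,q)$; the second argument $q+s\xi+o(s)$ gives $\partial_2 F[q]_\tau(t)\cdot\xi(t,q)$; the third argument, the quotient $(\dot q+s\dot\xi)/(1+s\dot\eta)$, has $s$-derivative at $s=0$ equal to $\dot\xi(t,q)-\dot q(t)\dot\eta(t,q)$, yielding the term $\partial_3 F[q]_\tau(t)\cdot(\dot\xi-\dot q\,\dot\eta)$. The fourth and fifth arguments involve the delayed quantities $q(t-\tau)$ and $\dot q(t-\tau)$, contributing $\partial_4 F[q]_\tau(t)\cdot\xi(t-\tau,q(t-\tau))$ and $\partial_5 F[q]_\tau(t)\cdot(\dot\xi(t-\tau)-\dot q(t-\tau)\dot\eta(t-\tau))$. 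Finally, differentiating the Jacobian factor $(1+s\dot\eta)$ while keeping $F$ at $s=0$ gives $F[q]_\tau(t)\,\dot\eta(t,q)$. Setting the whole expression equal to $\int_I\dot\Phi[q]_\tau(t)\,dt$ and moving that term to the same side produces an integral identity valid for every subinterval $I$.

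The decisive manoeuvre, and the one that splits the statement into its two regimes \eqref{eq:cnsind1} and \eqref{eq:cnsind2}, is the change of variable $t\mapsto t+\tau$ applied to the delayed terms, exactly as in the proof of Theorem~\ref{Thm:FractELeq1}. Using that $\xi$ and $\eta$ are evaluated along $q$, and recalling the convention $[q]_\tau(t)=(t,q(t),\dot q(t),q(t-\tau),\dot q(t-\tau))$, the contributions $\partial_4 F[q]_\tau(t)\cdot\xi(t-\tau,\cdot)$ and $\partial_5 F[q]_\tau(t)\cdot(\dot\xi(t-\tau)-\dot q(t-\tau)\dot\eta(t-\tau))$ get re-indexed to $\partial_4 F[q]_\tau(t+\tau)\cdot\xi(t,q)$ and $\partial_5 F[q]_\tau(t+\tau)\cdot(\dot\xi(t,q)-\dot q(t)\dot\eta(t,q))$ on the shifted interval $[t_1,t_2-\tau]$. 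For $t\in[t_2-\tau,t_2]$ the shifted arguments $t+\tau$ fall outside $[t_1,t_2]$, so the delayed partials $\partial_4 F$ and $\partial_5 F$ drop out entirely; this is precisely why \eqref{eq:cnsind2} contains only the $\partial_1,\partial_2,\partial_3$ terms while \eqref{eq:cnsind1} carries the extra delayed pieces.

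The main obstacle I anticipate is bookkeeping rather than conceptual difficulty: one must be careful that the integration limits transform correctly under $t\mapsto t+\tau$ and that the boundary-value convention \eqref{Pe2} together with the vanishing of the variations on $[t_1-\tau,t_1]$ is invoked to discard the unwanted endpoint contributions, so that the two integrals over $[t_1,t_2-\tau]$ and $[t_2-\tau,t_2]$ assemble cleanly. Since the identity must hold for every subinterval $I\subseteq[t_1,t_2]$, and in particular for $I\subseteq[t_1,t_2-\tau]$ and $I\subseteq[t_2-\tau,t_2]$ separately, the two equalities \eqref{eq:cnsind1} and \eqref{eq:cnsind2} follow from the arbitrariness of $I$. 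No integration by parts is needed here — unlike in the Euler--Lagrange derivation — because this is an integral (not a pointwise) necessary condition, which is exactly what makes it applicable in the Lipschitz class where $\ddot q$ need not exist.
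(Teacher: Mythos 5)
Your proposal is correct and follows essentially the same route as the paper's proof: expand the invariance condition by differentiating under the integral at $s=0$ (chain rule on the five arguments plus the Jacobian factor), perform the change of variables $t\mapsto t+\tau$ in the delayed terms while using $\xi=\eta=0$ on $[t_1-\tau,t_1]$ to kill the unwanted piece, and invoke the arbitrariness of the subinterval $I$ to split the resulting identity into \eqref{eq:cnsind1} and \eqref{eq:cnsind2}. There is no gap; the bookkeeping you flag is precisely what the paper's proof carries out.
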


\begin{proof}
Without loss of generality, we take $I=[t_1,t_2]$.
Then, \eqref{eq:invndLIP} is equivalent to
\begin{equation}
\label{eq:cnsind3}
\begin{split}
\int_{t_1}^{t_2} \Bigl[ &-\dot{\Phi}[q]_{\tau}(t)
+\partial_{1}\left(L[q]_{\tau}(t)-\lambda\cdot g[q]_{\tau}(t)\right)\eta(t,q)
+\partial_{2}\left(L[q]_{\tau}(t)-\lambda\cdot g[q]_{\tau}(t)\right)\cdot\xi(t,q)\\
&+\partial_{3}\left(L[q]_{\tau}(t)-\lambda\cdot
g[q]_{\tau}(t)\right)\cdot\left(\dot{\xi}(t,q)
-\dot{q}(t)\dot{\eta}(t,q)\right)+\left(L[q]_{\tau}(t)-\lambda\cdot g[q]_{\tau}(t)\right)\dot{\eta}(t,q)\Bigr]dt\\
&+\int_{t_1}^{t_2}\Bigl[\partial_{4}
\left(L[q]_{\tau}(t)-\lambda\cdot g[q]_{\tau}(t)\right)\cdot\xi(t-\tau,q(t-\tau))\\
&+\partial_{5}\left(L[q]_{\tau}(t)-\lambda\cdot
g[q]_{\tau}(t)\right)\cdot\left(\dot{\xi}(t-\tau,q(t-\tau))
-\dot{q}(t-\tau)\dot{\eta}(t-\tau,q(t-\tau))\right)\Bigr]dt= 0.
\end{split}
\end{equation}
Performing a linear change of variables $t=\sigma+\tau$ in the last
integral of \eqref{eq:cnsind3}, and keeping in mind that
$\xi=\eta=0$ on $[t_1-\tau,t_1]$, equation \eqref{eq:cnsind3}
becomes
\begin{equation}
\label{eq:cnsind}
\begin{split}
\int_{t_1}^{t_2-\tau}\Bigl[&-\dot{\Phi}[q]_{\tau}(t)+\partial_{1}
\left(L[q]_{\tau}(t)-\lambda\cdot g[q]_{\tau}(t)\right)\eta(t,q)\\
&+\left(\partial_{2} \left(L[q]_{\tau}(t)-\lambda\cdot
g[q]_{\tau}(t)\right)+\partial_4
\left(L[q]_{\tau}(t+\tau)-\lambda\cdot g[q]_{\tau}(t+\tau)\right)\right)\cdot\xi(t,q)\\
&+\left(\partial_{3}\left(L[q]_{\tau}(t)-\lambda\cdot
g[q]_{\tau}(t)\right)+\partial_5
\left(L[q]_{\tau}(t+\tau)-\lambda\cdot
g[q]_{\tau}(t+\tau)\right)\right)\cdot\left(\dot{\xi}(t,q)
-\dot{q}(t)\dot{\eta}(t,q)\right)\\
&+\left(L[q]_{\tau}(t)-\lambda\cdot g[q]_{\tau}(t)\right)\dot{\eta}(t,q)\Bigr]dt\\
&+ \int_{t_2-\tau}^{t_2}\Bigl[-\dot{\Phi}[q]_{\tau}(t)+\partial_{1}
\left(L[q]_{\tau}(t)-\lambda\cdot g[q]_{\tau}(t)\right)\eta(t,q)
+\partial_{2}
\left(L[q]_{\tau}(t)-\lambda\cdot g[q]_{\tau}(t)\right)\cdot\xi(t,q)\\
&+\partial_{3}\left(L[q]_{\tau}(t)-\lambda\cdot
g[q]_{\tau}(t)\right)\cdot\left(\dot{\xi}(t,q)
-\dot{q}(t)\dot{\eta}(t,q)\right)+\left(L[q]_{\tau}(t)-\lambda\cdot
g[q]_{\tau}(t)\right)\dot{\eta}(t,q)\Bigr]dt = 0.
\end{split}
\end{equation}
Taking into consideration that \eqref{eq:cnsind} holds for an arbitrary subinterval
$I \subseteq [t_1,t_2]$, equations \eqref{eq:cnsind1} and \eqref{eq:cnsind2} hold.
\end{proof}

\begin{theorem}[Noether's symmetry theorem with time delay for Lipschitz functions]
\label{theo:tnnd} If functional \eqref{agp} is invariant up to
$\Phi$ in the sense of Definition~\ref{def:invndLIP} such that
satisfy the condition \eqref{CDUR}, then the quantity
$C(t,t+\tau,q(t),q(t-\tau),q(t+\tau),\dot{q}(t),\dot{q}(t-\tau),\dot{q}(t+\tau))$
defined by
\begin{multline}
\label{eq:tnnd} -\Phi[q]_{\tau}(t)+\left(\partial_{3} F[q]_{\tau}(t)
+\partial_{5} F[q]_{\tau}(t+\tau)\right)\cdot\xi(t,q(t))\\
+\Bigl(F[q]_{\tau}-\dot{q}(t)\cdot(\partial_{3} F[q]_{\tau}(t)
+\partial_{5} F[q]_{\tau}(t+\tau))\Bigr)\eta(t,q(t))
\end{multline}
for $t_1\leq t\leq t_{2}-\tau$ and by
\begin{equation}
\label{eq:tnnd11} -\Phi[q]_{\tau}(t)+\partial_{3}
F[q]_{\tau}(t)\cdot\xi(t,q(t))
+\Bigl(F[q]_{\tau}-\dot{q}(t)\cdot\partial_{3}
F[q]_{\tau}(t)\Bigr)\eta(t,q(t))
\end{equation}
for $t_2-\tau< t\leq t_{2}\,,$ is a constant of motion with time
delay along any
$q(\cdot)\in Lip\left([t_1-\tau,t_2]\right)$ satisfying both
\eqref{eq:eldf11} and \eqref{eq:cdrnd}-\eqref{eq:cdrnd1}, i.e.,
along any Lipschitz Euler--Lagrange extremal that is also a
Lipschitz DuBois--Reymond extremal.
\end{theorem}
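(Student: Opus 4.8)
The plan is to establish the two branches of the conservation law separately, carrying out the computation in detail on $t_1\leq t\leq t_2-\tau$ and then repeating the (shorter) argument on $t_2-\tau\leq t\leq t_2$, where the shifted contributions $\partial_5 F[q]_\tau(t+\tau)$ are absent. The entry point is Lemma~\ref{thm:CNSI:SCV}. Since the necessary condition of invariance \eqref{eq:cnsind1} is asserted for \emph{every} subinterval $I\subseteq[t_1,t_2-\tau]$, I would first argue that its integrand must vanish for almost every $t$; for Lipschitz $q(\cdot)$ this pointwise identity holds a.e., and it is this a.e.\ identity, rather than the integral form, that drives the rest of the proof.

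Next I would differentiate the candidate constant of motion \eqref{eq:tnnd} directly. Writing $C$ for that quantity, the product rule produces three groups of terms: the $-\dot\Phi[q]_\tau(t)$ coming from the gauge term, the derivative of $\bigl(\partial_3 F[q]_\tau(t)+\partial_5 F[q]_\tau(t+\tau)\bigr)\cdot\xi$, and the derivative of the energy-like factor $\bigl(F[q]_\tau(t)-\dot q(t)\cdot(\partial_3 F[q]_\tau(t)+\partial_5 F[q]_\tau(t+\tau))\bigr)\eta$. The two nonsmooth derivatives are eliminated by the two hypotheses in turn: the isoperimetric Euler--Lagrange equation \eqref{eq:eldf11} replaces $\tfrac{d}{dt}\bigl(\partial_3 F[q]_\tau(t)+\partial_5 F[q]_\tau(t+\tau)\bigr)$ by $\partial_2 F[q]_\tau(t)+\partial_4 F[q]_\tau(t+\tau)$, while the isoperimetric DuBois--Reymond condition \eqref{eq:cdrnd} replaces $\tfrac{d}{dt}\bigl(F[q]_\tau(t)-\dot q(t)\cdot(\partial_3 F[q]_\tau(t)+\partial_5 F[q]_\tau(t+\tau))\bigr)$ by $\partial_1 F[q]_\tau(t)$. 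After these two substitutions and a short rearrangement grouping the $\xi$, $\dot\xi$, $\eta$ and $\dot\eta$ coefficients, $\tfrac{dC}{dt}$ collapses term by term onto the integrand of \eqref{eq:cnsind1}, which was shown to vanish a.e.\ in the first step; hence $\tfrac{dC}{dt}=0$ a.e., which is exactly the defining property in Definition~\ref{def:leicond}. On $t_2-\tau\leq t\leq t_2$ I would run the same computation using \eqref{eq:tnnd11}, the second Euler--Lagrange branch of \eqref{eq:eldf11}, the DuBois--Reymond condition \eqref{eq:cdrnd1}, and the necessary condition \eqref{eq:cnsind2}.

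I expect the main obstacle to be regularity rather than algebra. For $q(\cdot)\in Lip\left([t_1-\tau,t_2]\right)$ the quantities $\partial_3 F[q]_\tau(t)+\partial_5 F[q]_\tau(t+\tau)$ and $F[q]_\tau(t)$ are not differentiable a priori, so the termwise differentiation of $C$ is not automatically legitimate. This is precisely why the statement restricts attention to $q(\cdot)$ that are simultaneously Euler--Lagrange and DuBois--Reymond extremals: each hypothesis asserts that one specific combination of these nonsmooth expressions is absolutely continuous with a prescribed a.e.\ derivative, and it is \emph{only} those two combinations that appear after the product rule. Thus the two necessary conditions supply exactly the differentiability the computation requires, and the auxiliary assumption \eqref{CDUR} is what keeps the delayed DuBois--Reymond identity available at all. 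The remaining care is purely bookkeeping: tracking the shifted argument $t+\tau$ so that the $\partial_4$ and $\partial_5$ terms match correctly when $\tfrac{dC}{dt}$ is identified with the vanishing integrand of \eqref{eq:cnsind1}.
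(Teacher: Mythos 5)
Your proposal is correct and follows essentially the same route as the paper's own proof: both rest on the necessary condition of invariance (Lemma~\ref{thm:CNSI:SCV}), both substitute the Euler--Lagrange equations \eqref{eq:eldf11} and the DuBois--Reymond conditions \eqref{eq:cdrnd}--\eqref{eq:cdrnd1} to identify the invariance integrand with $\frac{d}{dt}$ of the claimed quantity, and both invoke the arbitrariness of the subinterval to conclude constancy. The only difference is organizational --- you localize to an a.e.\ pointwise identity first and then differentiate $C$, whereas the paper substitutes inside the integral and localizes at the end --- and your explicit justification of why the termwise differentiation is legitimate for Lipschitz $q(\cdot)$ merely makes precise a point the paper leaves implicit.
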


\begin{proof}
We prove the theorem in the interval $t_1\leq t\leq t_{2}-\tau$. The
proof is similar for the interval $t_2-\tau\leq t\leq t_{2}$.
Noether's constant of motion with time delay \eqref{eq:tnnd} follows
by using in the interval $t_1\leq t\leq t_{2}-\tau$ the
DuBois--Reymond condition with time delay \eqref{eq:cdrnd} and the
Euler--Lagrange equation with time delay \eqref{eq:eldf11} into the
necessary condition of invariance \eqref{eq:cnsind1}:
\begin{equation*}
\begin{split}
0&=\int_{t_1}^{t_2-\tau}\Bigl[-\dot{\Phi}[q]_{\tau}(t)+\partial_{1}
\left(L[q]_{\tau}(t)-\lambda\cdot
g[q]_{\tau}(t)\right)\eta(t,q)\\
&+\left(\partial_{2}
\left(L[q]_{\tau}(t)-\lambda\cdot g[q]_{\tau}(t)\right)+\partial_4 L[q]_{\tau}(t+\tau)\right)\cdot\xi(t,q)\\
& +\left(\partial_{3}\left(L[q]_{\tau}(t)-\lambda\cdot
g[q]_{\tau}(t)\right)+\partial_5
L[q]_{\tau}(t+\tau)\right)\cdot\left(\dot{\xi}(t,q)
-\dot{q}(t)\dot{\eta}(t,q)\right)\\
&+\left(L[q]_{\tau}(t)-\lambda\cdot g[q]_{\tau}(t)\right)\dot{\eta}(t,q)\Bigr]dt\\
&=
\int_{t_1}^{t_2-\tau}\Bigl[-\dot{\Phi}[q]_{\tau}(t)+\frac{d}{dt}\left(\partial_{3}
\left(L[q]_{\tau}(t)-\lambda\cdot g[q]_{\tau}(t)\right)
+\partial_5\left(L[q]_{\tau}(t+\tau)-\lambda\cdot
g[q]_{\tau}(t+\tau)\right)\right)\cdot\xi(t,q)\\
&\quad +\left(\partial_{3} \left(L[q]_{\tau}(t)-\lambda\cdot
g[q]_{\tau}(t)\right)+\partial_5\left(L[q]_{\tau}(t+\tau)-\lambda\cdot
g[q]_{\tau}(t+\tau)\right)\right)\cdot\dot{\xi}(t,q)\\
&\quad +\frac{d}{dt}\left\{L[q]_{\tau}(t)-\lambda\cdot
g[q]_{\tau}(t)-\dot{q}(t)\cdot(\partial_{3}
\left(L[q]_{\tau}(t)-\lambda\cdot g[q]_{\tau}(t)\right)\right.\\
&\left. +\partial_{5} \left(L[q]_{\tau}(t+\tau)-\lambda\cdot
g[q]_{\tau}(t+\tau)\right))\right\}\eta(t,q)\\
&\quad +\left\{L[q]_{\tau}(t)-\lambda\cdot
g[q]_{\tau}(t)-\dot{q}(t)\cdot(\partial_{3}
\left(L[q]_{\tau}(t)-\lambda\cdot
g[q]_{\tau}(t)\right)\right.\\
&\left. +\partial_{5} \left(L[q]_{\tau}(t+\tau)-\lambda\cdot
g[q]_{\tau}(t+\tau)\right))\right\}\dot{\eta}(t,q)\Bigr]dt,
\end{split}
\end{equation*}
that is,
\begin{multline}
\label{eq:cnsind11}
\int_{t_1}^{t_2-\tau}\frac{d}{dt}\Bigl[-\Phi[q]_{\tau}(t)
+\left(\partial_{3} L[q]_{\tau}(t)
+\partial_{5} L[q]_{\tau}(t+\tau)\right)\cdot\xi(t,q(t))\\
+\Bigl(L[q]_{\tau}(t)-\dot{q}(t)\cdot(\partial_{3} L[q]_{\tau}(t)
+\partial_{5} L[q]_{\tau}(t+\tau))\Bigr)\eta(t,q(t))\Bigr]dt = 0.
\end{multline}
Taking into consideration that \eqref{eq:cnsind11} holds for any
subinterval $I\subseteq [t_1,t_2-\tau]$, we conclude that
\begin{multline*}
-\Phi[q]_{\tau}(t)+\left(\partial_{3} L[q]_{\tau}(t)
+\partial_{5} L[q]_{\tau}(t+\tau)\right)\cdot\xi(t,q(t))\\
+\Bigl(L[q]_{\tau}(t)-\dot{q}(t)\cdot(\partial_{3} L[q]_{\tau}(t)
+\partial_{5} L[q]_{\tau}(t+\tau))\Bigr)\eta(t,q(t))=\text{constant}.
\end{multline*}
\end{proof}

\begin{example}

Consider the isoperimetric problem of the calculus of variations
with time delay
\begin{equation}
\label{eq:ex}
\begin{gathered}
J^{1}[q(\cdot)]=\int_0^3\left(\dot{q}(t)+\dot{q}(t-1)\right)^2dt \longrightarrow \min,\\
q(t)=-t \, ,~-1\leq t\leq 0, \quad q(3)=1,
\end{gathered}
\end{equation}
subject to  isoperimetric equality constraints
\begin{equation},\,m\geq1\,;
\label{CT5} I^{1}[q(\cdot)]=\int_{0}^{3}
\left(\dot{q}(t)\right)^2dt=l
\end{equation}
in the class of functions
$q(\cdot)\in Lip\left([-1,3]\right)$.

For this example, the augmented
Lagrangian $F$ is given as
\begin{equation}\label{}
    F=\left(\dot{q}(t)+\dot{q}(t-1)\right)^2
    -\lambda\left(\dot{q}(t)\right)^2 \,.
\end{equation}
From Theorem~\ref{Thm:FractELeq1} (see Remark~\ref{re:EL}), one
obtains that any solution to problem \eqref{eq:ex}-\eqref{CT5} must
satisfy
\begin{equation}
\label{eq:ex:EL1}
2\dot{q}(t)+\dot{q}(t-1)+\dot{q}(t+1)
-2\lambda\left(\dot{q}(t)+\dot{q}(t+1)\right)=c_1,
\quad 0\leq t\leq 2,
\end{equation}
\begin{equation}
\label{eq:ex:EL2}
\dot{q}(t)+\dot{q}(t-1)-\lambda\dot{q}(t)=c_2,
\quad 2\leq t\leq 3,
\end{equation}
where $c_1$ and $c_2$ are constants.

Note that the functional integral $J^{1}$ and the  augmented
Lagrangian $F$ of our above problem satisfy all the hypotheses of the Theorems~\ref{thmtonelli} and ~\ref{th:RE} if $\lambda<1\,.$
Because problem
\eqref{eq:ex}--\eqref{CT5} is autonomous, we have invariance, in the
sense of Definition~\ref{def:invndLIP}, with $\eta\equiv 1$ and
$\xi\equiv 0$. Simple calculations show that isoperimetric Noether's
constant of motion with time delay
\eqref{eq:tnnd}--\eqref{eq:tnnd11} coincides with the
DuBois--Reymond condition \eqref{eq:cdrnd}--\eqref{eq:cdrnd1}:
\begin{multline}
\label{eq:ex:DBR1} \left(\dot{q}(t)+\dot{q}(t-\tau)\right)^2
    -\lambda\left(\dot{q}(t)\right)^2
    -2\dot{q}(t)\left[2\dot{q}(t)+\dot{q}(t-1)+\dot{q}(t+1)\right.\\
\left.-\lambda\left(\dot{q}(t)+\dot{q}(t+1)\right)\right]=c_3,\quad
0\leq t\leq 2,
\end{multline}
and 
\begin{equation}
\label{eq:ex:DBR2}\left(\dot{q}(t)+\dot{q}(t-\tau)\right)^2
    -\lambda\left(\dot{q}(t)\right)^2
    -2\dot{q}(t)\left[ \dot{q}(t)+\dot{q}(t-1)-\lambda\dot{q}(t)\right]
    =c_4,\quad 2\leq t\leq 3,
\end{equation}
where $c_3$ and $c_4$ are constants.

One can easily check that function
$q(\cdot)\in Lip\left([-1,3]\right)$
defined by
\begin{equation}
\label{eq:ext:ex:22b} q(t)=
\begin{cases}
-t & ~\textnormal{for}~ -1< t\leq 0\\
t & ~\textnormal{for}~ 0< t\leq 1\\
-t+2 & ~\textnormal{for}~ 1< t\leq 2\\
t-2 & ~\textnormal{for}~ 2< t\leq 3
\end{cases}
\end{equation}
is an isoperimetric Euler--Lagrange extremal, i.e., satisfies
\eqref{eq:ex:EL1}--\eqref{eq:ex:EL2} and is also a isoperimetric
DuBois--Reymond extremal with $l=1$ and $\lambda=-1$ i.e., satisfies
\eqref{eq:ex:DBR1}--\eqref{eq:ex:DBR2}. Theorem~\ref{theo:tnnd}
asserts the validity of Noether's constant of motion, which is here
 verified: \eqref{eq:tnnd}--\eqref{eq:tnnd11} holds along
\eqref{eq:ext:ex:22b} with $\Phi\equiv 0$, $\eta\equiv 1$, and
$\xi\equiv 0$.

\end{example}




\section{Conclusions and Open Questions}
\label{sec:Conc}

Isoperimetric problems are a classical and historical mathematical subject that is currently in strong research due to its numerous applications in physics and engineering~\cite{Bruce:book}. On the other hand, problems with time delays play a crucial role in the modeling of real-life phenomena in various fields of applications\cite{ChiLoi,EFridman,GoKeMa}. However, only recently the theory of conservation law in variational calculus with delay systems was initiated in \cite{GF2012} with the proof of delayed Noether's theorem. In the present work, we go a step further: we prove an isoperimetric Noether's theorem with time delay.

The isoperimetric variational theory with time delay is in its childhood so that much remains to be done. This is particularly true in the area of isoperimetric optimal control with time delay, where the results are rare. Here, an isoperimetric Lagrangian formulation with time delay is obtained. To the best of the author's knowledge, there is no general formulation of an isoperimetric version of Pontryagin's Maximum Principle. Then, with an isoperimetric notion of Pontryagin extremal, one can try to extend the present results to the more general context of isoperimetric optimal control with time delay.

\section*{Acknowledgements}

This work was supported in part by CNPq, CAPES and FAPERGS, brazilian funding agencies.




\begin{thebibliography}{xx}

\bibitem{CD:Bartos}
Z. Bartosiewicz\ and\ D. F. M. Torres, {\it Noether's theorem on time scales},
J. Math. Anal. Appl. {\bf 342} (2008), no.~2, 1220--1226.

\bibitem{Basin:book}
M. Basin,
{\it New trends in optimal filtering and control for polynomial and time-delay systems},
Lecture Notes in Control and Information Sciences, {\bf 380}, Springer, Berlin, 2008.

\bibitem{Bok}
G. V. Bokov,
{\it Pontryagin's maximum principle in a problem with time delay},
J. Math. Sci. (N. Y.) {\bf 172} (2011), no.~5, 623--634.

\bibitem{Bot}
G. Buttazzo, M. Giaquinta and S. Hildebrandt, {\it One-dimensional variational problems: An intrudution},
Oxford University Press, UK, 1998.

\bibitem{MR2759829}
H. Brezis,
{\it Functional analysis, Sobolev spaces and partial differential equations},
Universitext, Springer, New York, 2011.

\bibitem{MR0688142}
L. Cesari,
{\it Optimization---theory and applications},
Applications of Mathematics (New York),
17, Springer, New York, 1983.

\bibitem{ChiLoi} J. Chiasson and J. J. Loiseau (Eds.)
{\it Applications of Time Delay Systems}
Springer-Verlag Berlin Heidelberg, 2007.

\bibitem{MR2361288}
B. Dacorogna,
{\it Direct methods in the calculus of variations},
second edition, Applied Mathematical Sciences, 78, Springer, New York, 2008.

\bibitem{CD:Djukic:1972}
D. S. Djuki\'{c}, {\it Noether's theorem for optimum control systems},
Internat. J. Control (1) {\bf 18} (1973), 667--672.


\bibitem{CD:kolk:2004}
J.J. Duistermaat and J. A. C. Kolk, {\it Multimensional real analysis I: Differentiation},  Cambridge University Press, UK, 2004.



\bibitem{Gastao:PhD:thesis}
G. S. F. Frederico,
{\it Generalizations of Noether's theorem in the calculus of variations and optimal control},
Ph.D. thesis, University of Cape Verde, 2009.

\bibitem{GastaoLazo} G. S. F. Frederico and M. J. Lazo,
{\it Fractional Noether's Theorem with Classical and Caputo Derivatives: constants of motion for non-conservative systems},
Nonlinear Dynamics, {\bf 85} (2016), 839--851.

\bibitem{GF:JMAA:07}
G. S. F. Frederico\ and\ D. F. M. Torres,
{\it A formulation of Noether's theorem for fractional problems
of the calculus of variations},
J. Math. Anal. Appl. {\bf 334} (2007), no.~2, 834--846.



\bibitem{MR2351636}
G. S. F. Frederico\ and\ D. F. M. Torres, {\it Conservation laws for
invariant functionals containing compositions}, Appl. Anal. {\bf 86}
(2007).

\bibitem{GF2012}
G. S. F. Frederico\ and\ D. F. M. Torres,
{\it Noether's symmetry theorem for variational and optimal control problems with time delay},
Numer. Algebra Control Optim. {\bf 2} (2012), no.~3, 619--630.

\bibitem{GF2013}
G. S. F. Frederico, T. Odzijewicz \ and\ D. F. M. Torres, {\it Noether's
Theorem for Nonsmooth Extremals of Variational Problems with Time
Delay}, Applicable Analysis {\bf 93} (2014), no.~1, 153--170.

\bibitem{EFridman} E. Fridman,
{\it Introduction to Time-Delay Systems},
Springer International Publishing Switzerland, 2014

\bibitem{CD:Gel:1963}
I. M. Gelfand\ and\ S. V. Fomin: {\it Calculus of variations},
Prentice-Hall, Englewood Cliffs, N.J. 1963.

\bibitem{GoKeMa}
L. G\"ollmann, D. Kern\ and\ H. Maurer,
{\it Optimal control problems with delays in state and control variables
subject to mixed control-state constraints},
Optimal Control Appl. Methods {\bf 30} (2009), no.~4, 341--365.

\bibitem{MR2323264}
P. D. F. Gouveia, D. F. M. Torres\ and\ E. A. M. Rocha,
{\it Symbolic computation of variational symmetries in optimal control},
Control Cybernet. {\bf 35} (2006), no.~4, 831--849.

\bibitem{DH:1968}
D. K. Hughes,
{\it Variational and optimal control problems with delayed argument},
J. Optimization Theory Appl. {\bf 2} (1968), 1--14.


\bibitem{Kra}
G. L. Kharatishvili,
{\it A maximum principle in extremal problems with delays},
in Mathematical Theory of Control (Proc. Conf.,
Los Angeles, Calif., 1967), 26--34, Academic Press, New York.

\bibitem{Kharatishvili}
G. L. Kharatishvili\ and\ T. A. Tadumadze,
{\it Formulas for the variation of a solution and optimal control
problems for differential equations with retarded arguments},
J. Math. Sci. (N. Y.) {\bf 140} (2007), no.~1, 1--175.

\bibitem{CD:Kosmann:2004}
E. Noether, Y. Kosmann-Schwarzbach\ and\ L. Meersseman, {\it Les
théorèmes de Noether: invariance et lois de conservation au $XX^{e}$
siècle}, Palaiseau (Essonne), Les Éditions de l'École Polytechnique,
2004.

\bibitem{Logan:b}
J. D. Logan, {\it Applied mathematics}, Wiley, New York, 1987.

\bibitem{book:frac}
A. B. Malinowska\ and\ D. F. M. Torres,
{\it Introduction to the fractional calculus of variations},
Imperial College Press, London \&
World Scientific Publishing, Singapore, 2012.

\bibitem{NataliaNoether}
N. Martins\ and\ D. F. M. Torres,
{\it Noether's symmetry theorem for nabla problems
of the calculus of variations},
Appl. Math. Lett. {\bf 23} (2010), no.~12, 1432--1438.

\bibitem{Noether:1971}
E. Noether, {\it Invariant variation problems}, Transport Theory Statist.
Phys. {\bf 3} (1971), no.~1, 186--207.


\bibitem{Ros}
J. F. Rosenblueth,
{\it Systems with time delay in the calculus of variations: the method of steps},
IMA J. Math. Control Inform. {\bf 5} (1988), no.~4, 285--299.

\bibitem{ejc}
D. F. M. Torres,
{\it On the Noether theorem for optimal control},
Eur. J. Control {\bf 8} (2002), no.~1, 56--63.



\bibitem{Bruce:book} B. Van Brunt,{\it The calculus of variations}, Springer, New York, 2004.


\end{thebibliography}
\end{document}